\title[scattering relation to dn map in low regularity]{Determination of DN map from scattering relation for simple surfaces in low regularity}
\author{Kelvin Lam}
\email{klam0008@uw.edu}
\address{Department of Mathematics, University of Washington}
\date{\today}
\newtheorem{theorem}{Theorem}
\newtheorem{lemma}[theorem]{Lemma}
\newtheorem{corollary}[theorem]{Corollary}
\theoremstyle{definition}
\newtheorem{definition}[theorem]{Definition}
\newtheorem{remark}[theorem]{Remark}
\DeclareMathOperator{\Op}{Op}
\DeclareMathOperator{\Id}{Id}
\newcommand{\Vol}{\mathrm{Vol}}
\newcommand*{\N}{\mathbb{N}}
\newcommand*{\Z}{\mathbb{Z}}
\newcommand*{\R}{\mathbb{R}}
\newcommand{\der}{\mathrm{d}}
\newcommand{\eps}{\varepsilon}
\newcommand{\abs}[1]{\left\vert#1\right\vert}
\newcommand{\norm}[1]{\left\lVert#1\right\rVert}
\newcommand{\doo}[1]{\partial_{\mathrm{#1}}}
\newcommand{\antti}[1]{}
\newcommand{\kelvin}[1]{}
\newcommand{\joonas}[1]{}
\begin{document}

\maketitle

\begin{abstract}
In this paper we prove that on a simple surface where the metric is $C^{17}$, the scattering relation determines the Dirichlet to Neumann map (DN map) - a result proved in \cite{PesUhl2005} for the case when the metric is smooth. For metrics with finite differentiability we had to modified each technical result used in the original proof; such as properties of the exit time function and the characterization of $C_\alpha$ space (Theorem 5.1.1 \cite{PSUbook}). Moreover, surjectivity of $I^*$ in the original proof required the use of microlocal analysis of the normal operator $I^*I$; which is not a standard pseudodifferential operator when the metric only has finite regularity- this was addressed in \cite{IKL2023}. Finally, using the injectivity of $I$ on Lipschitz one forms for simple $C^{1,1}$ manifolds by \cite{IlmKyk2023} we prove an equivalent characterization of harmonic conjugacy using operators determined by the scattering relation (Theorem 1.6 \cite{PesUhl2005}) to prove the titular result. We also prove that the boundary distance function determines the metric at the boundary (which in turns determines the scattering relation) for a closed disk even when the metric is only $C^{1,1}$ and the exponential map is only Lipschitz and does not preserve tangent vectors or differentials pointwise.
\end{abstract}

\section{Introduction}

Given two simple metrics $g$ on a compact surface with boundary $M$, it is proved in \cite{PesUhl2005} that the boundary distance function determines the metric up to a boundary fixing diffeomorphism; in other word simple surfaces are boundary rigid. The key step in the proof of boundary rigidity involves showing that the boundary distance function determines the Dirichlet-to-Neumann (DN map) for simple metrics. In this paper we prove that the same is true when the metric has sufficiently high but finite regularity.

We prove several intermediate results with various regularity requirements for the metric $g$, from which it follows that for simple $C^{17}$ metrics DN map is determined by the boundary distance function.

The three main results in this paper are the following: 1. For $C^{1,1}$ metrics on the closed disk $\mathbb{D}$, the boundary distance function determines the metric on the boundary up to a boundary fixing gauge (which fixes the boundary distance function). 2. The $I^*$ the adjoint of the geodesic x ray transform is surjective for simple metrics $g$ that are $C^{17}$. 3. Given a simple metric $g$ of class $C^{17}$ on a compact surface with boundary, the boundary distance function determines the DN map.
These results are summarized in the section below.

\subsection{Main results}

\begin{theorem}
\label{thm1}
Let $M \subset \mathbb{R}^2$ be the closed unit disk, $g_1, g_2 $ be $C^{1,1}$ metrics on $M$ such that $d_{g_1}|_{\partial M \times \partial M} = d_{g_2}|_{\partial M \times \partial M}$, then there exists a $C^{2,1}$ diffeomorphism $\psi: M \rightarrow M$ with $\psi |_{\partial M} = id$ such that $\psi^*g_1 |_{\partial M} = g_2 |_{\partial M}$ 
\end{theorem}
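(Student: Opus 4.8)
The plan is to reduce the statement to a pointwise problem on $\partial M$: first show that the hypothesis forces $g_1$ and $g_2$ to induce the same Riemannian metric on the curve $\partial M$; then solve, at each boundary point, a linear-algebra problem that prescribes the $1$-jet along $\partial M$ of the sought diffeomorphism; and finally extend this $1$-jet to a global $C^{2,1}$ diffeomorphism of $M$. For the first step I claim $g_1(p)|_{T_p\partial M}=g_2(p)|_{T_p\partial M}$ for every $p\in\partial M$, by a purely local argument near $\partial M$ (so simplicity is not needed). For $p,q$ on a short arc of $\partial M$ one has $d_{g_i}(p,q)\le\ell_i(p,q)$ trivially, where $\ell_i$ denotes arclength in the induced metric of $g_i$; and passing to boundary normal coordinates $(\theta,t)$ for $g_i$ near $p$, in which $g_i=dt^2+h_i(\theta,t)\,d\theta^2$ with $h_i$ continuous, shows that a near-minimizing path from $p$ to $q$ cannot leave a fixed small patch once $q$ is close to $p$, hence has length $\ge(1-o(1))\,\ell_i(p,q)$. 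Thus $d_{g_i}(p,q)=(1+o(1))\,\ell_i(p,q)$ as $q\to p$, uniformly, so $\ell_i(\gamma)=\lim_{\mathrm{mesh}\to0}\sum_j d_{g_i}(\gamma(t_j),\gamma(t_{j+1}))$ for every arc $\gamma\subset\partial M$, a quantity depending only on $d_{g_i}|_{\partial M\times\partial M}$; since these data agree, $\ell_1=\ell_2$, equivalently $g_1|_{T\partial M}=g_2|_{T\partial M}$.

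Next, fix $p\in\partial M$ together with a fixed positively oriented frame $(\tau_p,\nu_p)$ of $T_pM$ with $\tau_p\in T_p\partial M$. By the previous step, $g_1(p)$ and $g_2(p)$ are inner products on $T_pM\cong\R^2$ agreeing on the line $\R\tau_p$, so elementary linear algebra gives a unique orientation-preserving $L_p\in GL(T_pM)$ with $L_p\tau_p=\tau_p$ and $L_p^{*}g_1(p)=g_2(p)$ (writing $L_p\nu_p=a(p)\nu_p+b(p)\tau_p$, the coefficient $b(p)$ is linear in the metric entries and $a(p)>0$ is the admissible root of a quadratic with positive leading coefficient and necessarily positive discriminant); since $g_1,g_2\in C^{1,1}(M)$ and $\partial M$ is smooth, $p\mapsto L_p$ is $C^{1,1}$ along $\partial M$. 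It then suffices to produce a $C^{2,1}$ diffeomorphism $\psi$ of $M$ with $\psi|_{\partial M}=\mathrm{id}$ and $d\psi_p=L_p$ on $\partial M$, because then $(\psi^{*}g_1)(p)=L_p^{*}g_1(p)=g_2(p)$ for $p\in\partial M$, which is the assertion. I would prescribe the $2$-jet of $\psi$ along $\partial M$ by: $\psi|_{\partial M}=\mathrm{id}$; normal derivative realizing the transverse part of $L_p$ (which is $C^{1,1}$ in $p$, so its tangential derivative is Lipschitz); and second normal derivative zero. This is a $C^{0,1}$-Whitney $2$-jet on $\partial M$, hence extends to a map $\psi\in C^{2,1}(M)$ that, since $D\psi|_{\partial M}=L$ is invertible, is a diffeomorphism of a collar of $\partial M$; a standard interpolation in the interior of the collar makes it the identity away from $\partial M$, yielding a $C^{2,1}$ diffeomorphism of $M$. (Concretely, in a collar $\partial M\times[0,\eps)$ with coordinates $(x',t)$ one can take $\psi(x',t)=(x',t)+t\,\chi(t)\,(w*_{x'}\phi_t)(x')$, mollifying the $C^{1,1}$ datum $w$ at the transverse scale $t$, which keeps the interior second derivatives Lipschitz.)

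I expect the main obstacle to be the first step in the $C^{1,1}$ category: constructing boundary normal coordinates of a size uniform along $\partial M$, and controlling the exit time function, when the Christoffel symbols are only bounded and the exponential map from $\partial M$ is merely Lipschitz and does not act on tangent vectors as in the smooth case. One must in particular rule out focusing of the inward normal geodesics for a definite time using only the $L^\infty$ bound on the second fundamental form of $\partial M$. Steps two and three are then routine, with the one structural remark that requiring $\psi\in C^{2,1}$ forces the gauge to be built from the $1$-jet of the metrics at $\partial M$ rather than from full boundary normal coordinate maps, which in this regularity are themselves only $C^{1,1}$.
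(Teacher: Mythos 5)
Your overall architecture is essentially the one the paper uses: (i) show the boundary distance function determines the tangential part $g_i|_{T\partial M}$ by a first-variation argument $d_{g_i}(p,q)=(1+o(1))\,\ell_i(p,q)$; (ii) solve a pointwise linear-algebra problem on $T_pM$ for the required differential along $\partial M$ (your $L_p$ is exactly the composition $\Phi_2^{-1}\circ\Phi_1$ of the paper's two normalizations that each send the Euclidean normal to the $g_i$-unit normal, cf.\ Lemma \ref{lemma1}, equation (5)); and (iii) extend the resulting boundary $1$-jet to a global boundary-fixing $C^{2,1}$ diffeomorphism (the paper uses Anderson's lemma 3.3.1 \cite{anderson} plus Prop.\ 11.2.5 of \cite{PSUbook} where you propose a Whitney jet extension with mollification at the transverse scale; both are adequate). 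Steps (ii) and (iii) are correct as you describe them.

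The one place your proposal is not complete is exactly the place you flag: the lower bound $d_{g_i}(p,q)\ge(1-o(1))\,\ell_i(p,q)$ at $C^{1,1}$ regularity. Your route through boundary normal coordinates $g_i=dt^2+h_i\,d\theta^2$ requires a collar of uniform width in which the normal exponential map is a bi-Lipschitz homeomorphism and the Gauss lemma holds, and you do not supply this; it is a genuine issue since the coordinates are only Lipschitz and the metric in them only $C^{0,1}$. The paper's Lemma \ref{lemma2} closes precisely this gap by a different localization: it invokes Minguzzi's results for $C^{1,1}$ metrics \cite{Minguzzi} (Theorem 4, existence of geodesic balls/normal neighborhoods, and Theorem 6, that geodesics minimize among absolutely continuous curves within a normal neighborhood) to guarantee that the minimizer from $p$ to a nearby boundary point stays in a small ball where $g_i$ is $(1+\eps)$-comparable to a frozen Euclidean metric, and then compares with the straight chord. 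If you replace your unconstructed boundary normal coordinates with this comparison argument (or with any other proof that short minimizers between nearby boundary points do not leave a fixed small neighborhood), your proof goes through and is equivalent to the paper's.
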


\begin{theorem}
\label{thm2}
Let $(M,g)$ be a simple surface with $g \in C^k$ with $k \geq 10$, let $f \in C^l(M)$ with $1 < m+1< l-1 < k-7 $, $m,l,k \in \mathbb{N}$,  then there exists $w \in C^{\min(k-4,m)} (\partial_+ SM)$ with $w^\# \in C^{\min(k-4,m)} (SM)$, such that
$I^* w = f$.

\end{theorem}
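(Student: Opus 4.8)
The plan is to run the Pestov--Uhlmann proof of surjectivity of $I^*$, but with every microlocal step replaced by its finite-regularity counterpart from \cite{IKL2023} and with careful bookkeeping of the number of derivatives lost at each stage. First I would enlarge $(M,g)$ to a slightly larger simple surface $(M_1,g)$, extending the metric so that it stays simple and of class $C^k$, and extend $f$ to some $f_1\in C^l_c(M_1^\circ)$ using a bounded extension operator (this is free in regularity). All subsequent analysis happens on $M_1$; only at the very end does one restrict to geodesics of $M_1$ that meet $M$.

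The heart of the argument is the normal operator $N:=I_{M_1}^*I_{M_1}$ acting on functions. By \cite{IKL2023} it is, microlocally in $M_1^\circ$, elliptic of order $-1$ with a symbol of only finite smoothness (controlled by $k$); concretely it splits as $N=\Psi+R$ with $\Psi$ a principally classical elliptic pseudodifferential operator of order $-1$ and $R$ a remainder gaining a fixed finite number of derivatives. Because the symbol is merely $C^{k-c}$ for an absolute constant $c$, the parametrix expansion must be truncated after finitely many terms, yielding an approximate inverse $Q$ of order $1$ with $QN=\Id+\mathcal R$, $\mathcal R$ regularizing to the order $k$ permits. I would then solve $Nu=f_1$ on $M_1$ \emph{exactly}: ellipticity makes $N$ Fredholm between the appropriate extendible/supported Sobolev (or H\"older) pairs on $M_1$, injectivity of the geodesic X-ray transform on functions for simple surfaces (classical for $g\in C^k$, $k\ge2$, and valid down to $C^{1,1}$ by \cite{IlmKyk2023}) makes $N$ injective, and self-adjointness of $N$ on $L^2$ then kills the cokernel, so $N$ is invertible and $u:=N^{-1}f_1$ inherits the regularity dictated by $Q$. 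Tracking the loss in $Q$, in the Fredholm correction, and in the eventual restriction to $M$ is precisely where $k\ge10$ and the chain $1<m+1<l-1<k-7$ are used: they guarantee $u\in C^{\min(k-4,m)}$ on $M$.

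Finally, define $w^\#:SM\to\R$ by letting $w^\#(x,v)$ be the integral of $u$ over the complete $M_1$-geodesic through $(x,v)$; since this depends only on the geodesic and not on the base point, $w^\#$ is automatically constant along the geodesic flow of $(M,g)$, and it is the flow-invariant extension of its own restriction $w:=w^\#|_{\partial_+SM}$. Then
\[
I_M^* w(x)=\int_{S_xM}w^\#(x,v)\,d\omega_x(v)=I_{M_1}^*I_{M_1}u(x)=f_1(x)=f(x),\qquad x\in M .
\]
The regularity of $w^\#$ equals that of $u$ composed with the geodesic flow and exit-time function of $(M_1,g)$, so the modified properties of the exit-time function (the finite-regularity version of Theorem 5.1.1 of \cite{PSUbook}) give $w,w^\#\in C^{\min(k-4,m)}$, as claimed.

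The step I expect to be the main obstacle is solving $Nu=f_1$ \emph{on the nose}, not merely modulo a smooth error, in a setting where $N$ is not a classical pseudodifferential operator: one must marry the finite-regularity calculus of \cite{IKL2023} to a Fredholm/self-adjointness argument (or, if self-adjointness is awkward at finite regularity, perturb $N$ by a fixed smoothing operator and absorb the correction using injectivity, as in Stefanov--Uhlmann), and then verify that the derivative losses in the parametrix, in that correction, in forming $I_{M_1}u$, and in the constant-along-flow extension compose to exactly $\min(k-4,m)$. Everything else is a routine adaptation of \cite{PesUhl2005}.
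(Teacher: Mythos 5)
Your overall strategy---replace the smooth microlocal steps in Pestov--Uhlmann by the finite-regularity parametrix of \cite{IKL2023}, solve the normal equation exactly using injectivity of $I$ plus self-adjointness, then take $w$ to be the flow-invariant extension of the transform of the solution restricted to $\partial_+SM$---is the same as the paper's, and your final identity ($I^*w = Nu = f$ on $M$) and your account of where the exponent $\min(k-4,m)$ comes from (Sobolev embedding for the solution, the $C^{k-4}$ regularity of the exit time for the flow-invariant extension) match the paper's computation. The one structural difference is the ambient space: you enlarge $M$ to a single larger simple surface $M_1$ with boundary and work with $N = I_{M_1}^*I_{M_1}$ there, whereas the paper embeds $M$ into a \emph{closed} manifold, covers it by simple open sets $M_j$, and works with the globally defined operator $A = \sum_j \phi_j I_j^* I_j \phi_j$.

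That difference is exactly where your proof has a genuine gap, and you have correctly identified it yourself: solving $Nu = f_1$ on the nose. On a compact surface with boundary, $\phi I^*I\phi$ is elliptic of order $-1$ only in the interior and only in a non-smooth symbol class; there is no off-the-shelf Fredholm theory for such an operator between ``extendible/supported'' Sobolev pairs, and the boundary (transmission-condition) issues on $M_1$ are not addressed by \cite{IKL2023}. The paper's closed-manifold summation is not a cosmetic variation: it produces a bounded self-adjoint operator $A\colon L^2(N)\to L^2(N)$ for which (i) closed range is proved by hand from the left parametrices of Lemma \ref{lemma20} together with Rellich compactness of the remainders $R_j$, (ii) density of the range in $(\ker A^*)^\perp$ follows from self-adjointness, and (iii) injectivity reduces, via $\langle Af,f\rangle = \sum_j \| I_j\phi_j f\|^2$, to injectivity of each $I_j$ on $L^2$. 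If you keep the single-enlargement setup you must reproduce all of (i)--(iii) for $N$ on $M_1$, which is precisely the step you defer; if you instead adopt the closed-manifold construction, the rest of your argument (elliptic bootstrap to $H^{l-1}$, Sobolev embedding to $C^m$, exit-time regularity $C^{k-4}$, and the restriction to $M$ where only the $j=1$ term survives) goes through essentially as you describe.
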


\begin{theorem}
\label{thm3}
Let $(M,g_1)$ and $(M,g_2)$ be a simple surfaces with $g_1, g_2 \in C^{17}$, with $d_{g_1} = d_{g_2}$, then the DN maps $\Lambda: C^{2,\alpha} (\partial M) \rightarrow C^{1,\alpha}(\partial M)$ determined by $g_1,g_2$ are equal for all $0 < \alpha < 1$.
\end{theorem}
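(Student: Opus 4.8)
The plan is to follow the strategy of Pestov--Uhlmann \cite{PesUhl2005}, which reduces the equality of DN maps to the equality of certain operators built from the scattering relation, together with an equivalent characterization of harmonic conjugacy. First I would recall that since $d_{g_1} = d_{g_2}$, Theorem~\ref{thm1} (applied after reducing to the disk, or in general by the boundary determination results) gives that $g_1$ and $g_2$ agree on $\partial M$ up to a boundary-fixing gauge, and moreover the boundary distance function determines the scattering relation $\alpha_g$ and the exit time function (here the low-regularity analysis of the exit time and of the Santal\'o formula must be invoked, valid since $g_i \in C^{17}$). After pulling back by the gauge diffeomorphism we may assume $g_1$ and $g_2$ induce the same boundary metric, the same scattering relation, and the same exit time function; all operators on $\partial M$ constructed out of these data therefore coincide.

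The core analytic step is the characterization of harmonic conjugate functions purely in terms of scattering data, i.e.\ the analogue of Theorem~1.6 of \cite{PesUhl2005}: a function $u$ on $\partial M$ is the trace of a $g$-harmonic function whose harmonic conjugate has trace $v$ if and only if a certain operator equation relating $u$ and $v$ — expressed through the geodesic ray transform $I$, its adjoint $I^*$, and the scattering-relation operators — holds. To establish this in low regularity I would proceed in two directions. For the ``only if'' direction, given a harmonic $u$ with conjugate $u^*$, I would write down the explicit integral identity along geodesics connecting the harmonic differential $\der u$ (a Lipschitz one-form when $u \in C^{2,\alpha}$ and $g \in C^{17}$) to its ray transform, using that $\der u$ and its Hodge star $\star \der u = \der u^*$ are related by the complex structure, and that $I$ annihilates exact forms coming from functions vanishing on the boundary. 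For the ``if'' direction — the substantive one — I would use the surjectivity of $I^*$ from Theorem~\ref{thm2} to produce, from the candidate boundary data, an actual one-form on $M$, then use the injectivity of $I$ on Lipschitz one-forms for simple $C^{1,1}$ manifolds \cite{IlmKyk2023} to show this one-form is a harmonic differential, hence integrates to a harmonic function with the prescribed boundary trace and conjugate trace.

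Once this characterization is in hand, the conclusion is formal: the DN map $\Lambda_g$ is determined by the correspondence $u \mapsto$ (normal derivative of the harmonic extension of $u$), and the normal derivative is recovered from the tangential derivative of the harmonic conjugate trace $\partial_\tau u^*$ via the Cauchy--Riemann relations on $\partial M$ (which only involve the boundary metric, already shown equal). Since the harmonic-conjugacy correspondence $u \leftrightarrow u^*$ is expressed entirely through scattering-determined operators that are identical for $g_1$ and $g_2$, the maps $u \mapsto \partial_\tau u^*$ coincide, hence so do $\Lambda_{g_1}$ and $\Lambda_{g_2}$ on $C^{2,\alpha}(\partial M)$ for every $0<\alpha<1$. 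Undoing the initial gauge transformation (which fixes $\partial M$ pointwise and hence does not change the DN map) completes the argument.

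The main obstacle I anticipate is the ``if'' direction of the harmonic-conjugacy characterization in the finite-regularity setting: one must run the Pestov--Uhlmann argument while tracking regularity through $I^*$, through the normal operator $I^*I$ — which, as noted in the introduction and handled in \cite{IKL2023}, fails to be a classical pseudodifferential operator when $g$ is merely $C^{17}$ — and through the reconstruction of the one-form and its primitive, checking at each stage that the Hölder and Sobolev indices line up so that \cite{IlmKyk2023} applies (Lipschitz one-forms, simple $C^{1,1}$ manifold) and that the resulting harmonic function lies in $C^{2,\alpha}$ up to the boundary. A secondary technical point is verifying that all the boundary operators (fan-beam / scattering operators, the boundary Cauchy--Riemann operator) are genuinely determined by $d_g$ alone in this regularity class, which relies on the modified exit-time and $C_\alpha$-space results alluded to in the introduction.
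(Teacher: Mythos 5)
Your overall strategy is the paper's: reduce to equal boundary metrics via Theorem~\ref{thm1}, use the surjectivity of $I^*$ (Theorem~\ref{thm2}) to realize the harmonic conjugate $h_1$ of the harmonic extension of a given boundary datum as $I_1^*w$, invoke the Pestov--Uhlmann identity $2\pi A^*_- H_+ A_+ w = -A^*_- h^0_*$ and its converse characterization of harmonic conjugacy (proved via injectivity of $I$ on Lipschitz one-forms), and finally recover the normal derivative from the tangential derivative of the conjugate trace. All of that matches the paper's proof.

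There is, however, one genuine gap in the step where you assert that ``the harmonic-conjugacy correspondence is expressed entirely through scattering-determined operators that are identical for $g_1$ and $g_2$.'' The hypothesis of the characterization theorem (Theorem~\ref{thm1.6}) is that $w \in C_\alpha^{(2,\lambda)}(\partial_+SM)$, and the class $C_\alpha^j$ is \emph{not} scattering-determined: it requires $w^\#\in C^j(SM)$, where $w^\#$ is the extension of $w$ that is constant along geodesics of the metric in question, so it depends on the interior geodesic flow. The $w$ you produce via Theorem~\ref{thm2} lies in $C_\alpha$ for $g_1$ only; nothing in your argument puts it in $C_\alpha$ for $g_2$, and without that you cannot apply the conjugacy characterization on the $g_2$ side. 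The paper closes this gap by passing through the boundary-determined class $C_\beta^j$ (defined via $A_+w\in C^j(\partial SM)$, hence shared by the two metrics once the scattering relations agree) and then using the partial converse $C_\beta^{2j}\subset C_\alpha^{\lfloor \min(j,k-5)/2\rfloor}$ of Theorem~\ref{fold}, whose proof rests on the square-root structure of the exit time near the glancing region (Lemma~\ref{lemma329}) and Whitney's theorem on even functions. This transfer costs roughly half the available derivatives and is precisely what forces the hypothesis $g\in C^{17}$; it is a structural step of the proof rather than the routine index bookkeeping you describe, and your proposal should include it explicitly.
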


The proof of theorem 1 relies on some recent results by \cite{Minguzzi} and \cite{anderson}, theorem 2 relies on some microlocal analysis at low regularity studied in \cite{marschall1996} and \cite{IKL2023}, all of which derived from the proofs in \cite{PSUbook}, and finally we use theorem 1, 2 and a modification of the proof in \cite{PesUhl2005} to prove the titular result.

\subsection{Acknowledgements}

The author was supported by NSF, and partially supported by HKUST. The author would like to thank his advisor Gunther Uhlmann for his guidance, patience and for suggesting this problem. John M. Lee for his generosity with his time and knowledge. And Gabriel P. Paternain, Hart F. Smith, Joonas Ilmivirta, Antti Kykk\"anen, Joey Zou and Haim Grebnev for discussions.\\

\section{Preliminaries}
\label{sec:preliminaries}

We introduce the geometric preliminaries and the operators used throughout this paper in this section.

\subsection{Simple manifolds}

\begin{definition}
\label{def:simple}
Let $k \in \Z$ and assume that $k \ge 2$. Let $M$ be a compact smooth manifold with a smooth boundary and equip $M$ with a $C^k$ smooth Riemannian metric $g$. We say that $(M,g)$ is simple if $M$ is $C^k$-diffeomorphic to the closed Euclidean unit ball in $\R^n$ and the following hold:
\begin{enumerate}
    \item The boundary $\partial M$ is strictly convex in the sense of the second fundamental form.
    \item The manifold is non-trapping i.e. all geodesics hit the boundary in a finite time.
    \item There are no conjugate points in $M$.
\end{enumerate}
\end{definition}

When the Riemannian metric $g$ is $C^\infty$-smooth definition~\ref{def:simple} is equivalent to any standard definition of a simple manifold.

\subsection{Function spaces}

Let $(M,g)$ be a simple manifold where $g \in C^k(M)$ for some $k \ge 2$. Since $M$ is $C^{k-1}$-diffeomorphic to the closed Euclidean unit ball $\overline{B} \subseteq \R^n$ we take $M = \overline{B}$ from now on and all computations are to be interpreted via a $C^{k-1}$-diffeomorphism as explained in (Theorem 3.8.5, \cite{PSUbook})

We use smooth global coordinates $(x^1,\dots,x^n)$ in the definitions of our functions spaces. We use the Riemannian volume for $\der\Vol_g$ to define $L^2(M)$ in the standard way i.e. $L^2(M) = L^2(M,\der\Vol_g)$.

For $s>0$ we denote by $H^s_c(M)$ the space of compactly supported functions in $H^s(M)$ (Here by compactly supported we mean $f = \phi f$ for some $\phi \in C_c^\infty(M)$). For $s > 0$ we let $H^{-s}(M)$ be the continuous dual of $H^s(M)$ and $H^{-s}_c(M)$ be the subspace of compactly supported distributions.

\subsection{Non smooth operators and symbol}
we recall the basics of a non-smooth pseudodifferential calculus introduced in~\cite{marschall1996}. We rerecord the results that are relevant to the current work for the convenience of the reader.

Let $m \in \R$ and $r,L \in \N$ be given.
Multi-indices in~$\N^n$ are denoted by~$\alpha$ and~$\beta$. For all $\rho,\delta \in [0,1]$ the symbol class~$S^m_{\rho\delta}(r,L)$ consists of continuous functions $p \colon \R^n \times \R^n \to \R$ satisfying the estimates
\begin{equation}
\abs{\partial_\xi^\alpha p(x,\xi)}
\le
C_\alpha
(1 + \abs{\xi})^{m - \rho\abs{\alpha}}
\end{equation}
and
\begin{equation}
\norm{\partial_\xi^\alpha p(\,\cdot\,,\xi)}_{C^r_\ast}
\le
C_{\alpha r}
(1 + \abs{\xi})^{m + r\delta - \rho\abs{\alpha}}
\end{equation}
for all $\abs{\alpha} \le L$.

Given a symbol $p \in S^m_{\rho\delta}(r,L)$ the corresponding operator~$\Op(p)$ is defined by its action
\begin{equation}
\Op(p)f(x)
=
\int_{\R^n}
e^{ix\cdot\xi}
p(x,\xi)
\hat f(\xi)
\,\der \xi
\end{equation}
on functions~$f$ in~$L^2(\R^n)$. The identity operator~$\Id$ is the operator corresponding to the constant symbol~$1$.

\subsection{Geodesic X-ray transforms}

Let $(M,g)$ be a simple manifold where $g \in C^k(M)$ for some $k \ge 2$. For a given unit vector $v \in T_xM$ there is a unique geodesic $\gamma_{x,v}$ corresponding to the initial conditions $\gamma_{x,v}(0) = x$ and $\dot\gamma_{x,v}(0) = v$. Since the manifold is non-trapping, the geodesic $\gamma_{x,v}$ is defined on a maximal interval of existence $[-\tau_-(x,v),\tau_+(x,v)]$ where $\tau_\pm(x,v) \ge 0$ and we abbreviate $\tau \coloneqq \tau_+$.

The X-ray transform $If$ of a function $f \in L^2(M)$ is defined for all inwards pointing unit vectors $(x,v) \in \doo{+}SM$ by the formula
\begin{equation}
If(x,v)
\coloneqq
\int_0^{\tau(x,v)}
f(\gamma_{x,v}(t))
\,\der t.
\end{equation}
For $g \in C^5$, the same proof of Prop 4.1.2 in \cite{PSUbook} works to show that $I: L^2(M) \rightarrow L^2(\partial_+ SM)$ is bounded (The proof relies on the continuity of the term defined in lemma 3.2.8, which requires the odd extension of $\tau$ being $C^1$ in lemma 3.2.6, which requires $g \in C^5$). The backprojection $I^\ast h$ of a function $h$ on $L^2(\doo{+}(SM)))$ is defined for all $x \in M$ by the formula
\begin{equation}
I^\ast h(x)
\coloneqq
\int_{S_xM}
h(\phi_{-\tau(x,-v)})
\,\der S_x(v).
\end{equation}

Finally, we define the operator $N$ which we will call the normal operator. The normal operator is defined on $L^2(M)$ by the formula
\begin{equation}
Nf(x)
=
2\int_{S_xM}
\int_0^{\tau(x,v)}
f(\gamma_{x,v}(t)))
\,\der t
\,\der S_x(v).
\end{equation}
It is proved in [Prop 8.1.5 \cite{PSUbook}] that $N$ agrees with the composition $I^* I$ on $L^2(M)$ and hence the name normal operator.\\
In the case that $M$ is diffeomorphic a closed ball (which is the case if the metric $g$ is $C^k$ simple for $k \geq 2$). Then we can also consider the operator $\phi I^*I \phi$ with $\phi \in C_c^\infty(M)$, it is shown in lemma 11 of \cite{IKL2023} (See \ref{lemma11} below) that $\phi I^*I \phi$ is actually pseudodifferential operators with non smooth symbols.


\section{Boundary distance function determines the scattering relation}

In this section we prove theorem 1 (\ref{thm1}) for $C^{1,1}$, then we show that for $C^2$ metrics the boundary distance function also determines the scattering relation.

    We first prove two technical lemmas before proving theorem 1, for the rest of the proof of theorem 1 we assume $M = \mathbb{D} \subset  \mathbb{R}^2$.

\begin{lemma}\label{lemma1}

    Let $g$ be a $C^{1,1}$ metric on $M$, there exists a $C^{2,1}$ diffeomorphism $\phi : U \rightarrow U'$ with $U,U'$ neighborhoods of $\partial M$ such that $\phi|_{\partial M} = id$ and $\phi^*\nu = \Bar{\nu}$ where $\nu$ is the unit normal vector with respect to $g$ and $\Bar{\nu}$ is the Euclidean normal on $\partial M$.

\end{lemma}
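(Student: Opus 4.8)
The plan is to build the diffeomorphism $\phi$ using a boundary normal (semigeodesic) coordinate construction, but carried out for the \emph{Euclidean} metric rather than for $g$, and to check that this construction is compatible with $g$ to the order needed. Concretely, let $r(x)$ denote the Euclidean distance to $\partial M$; in a collar neighborhood $U$ of $\partial M$ the map $x \mapsto (\pi(x), r(x))$, where $\pi(x)$ is the nearest boundary point, gives $C^{2,1}$ boundary normal coordinates for the Euclidean metric (the boundary is smooth, so this is classical). In these coordinates the Euclidean unit normal $\bar\nu$ is simply $\partial_r$. On the $g$ side, the unit normal field $\nu$ extended into $U$ is only $C^{0,1}$ a priori, since $g \in C^{1,1}$ gives Christoffel symbols in $C^{0,1}$; but what we actually need is a diffeomorphism pulling $\bar\nu$ back to $\nu$, i.e. $d\phi(\bar\nu) = \nu \circ \phi$, which we can try to produce by flowing along $\nu$. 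The key point is that $\nu$, as a vector field defined near $\partial M$ (say by extending the boundary unit normal of $g$ and keeping it $g$-unit along the flow, or simply by taking the $g$-gradient of the $g$-distance to the boundary), is Lipschitz, so its flow $\Phi_t$ exists, is unique, and is bi-Lipschitz by Picard–Lindelöf; but Lipschitz is not enough for $\phi$ to be a $C^{2,1}$ diffeomorphism.

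To get the stated $C^{2,1}$ regularity I would instead take the following route. Define $\phi$ on $U$ by $\phi(\exp^{\text{eucl}}_{y}(s\,\bar\nu(y))) = \exp^{g}_{y}(s\,\nu_g(y))$ for $y \in \partial M$ and $s$ small, where $\exp^g$ is the $g$-exponential map and $\nu_g(y)$ is the inward (or outward) $g$-unit normal at the boundary point $y$; equivalently, $\phi$ maps Euclidean boundary-normal coordinates to $g$ boundary-normal coordinates via the identity on the $\partial M$-factor. Then by construction $\phi|_{\partial M} = \mathrm{id}$ and $\phi$ sends Euclidean normal geodesics from $\partial M$ to $g$-normal geodesics from $\partial M$, so $d\phi(\partial_r) = \dot\gamma$, the velocity field of the $g$-normal geodesic, which is exactly $\nu$ (the $g$-distance gradient) — giving $\phi^*\nu = \bar\nu$. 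The regularity bookkeeping is: $g \in C^{1,1} = C^{1,\alpha}$ for all $\alpha$, so the geodesic vector field on $SM$ is $C^{0,1}$, its flow (the exponential map) is $C^{1,1}$ in all its arguments, and composing with the smooth inverse of the Euclidean normal-coordinate map and the smooth boundary data $y \mapsto \nu_g(y)$ (which is $C^{1,1}$ since it is built from $g|_{\partial M}$ and the $C^\infty$ boundary) yields $\phi \in C^{1,1}$. To upgrade $C^{1,1}$ to $C^{2,1}$ one uses the ODE for the normal geodesic: $\ddot\gamma^k = -\Gamma^k_{ij}\dot\gamma^i\dot\gamma^j$ with $\Gamma \in C^{0,1}$, so $\ddot\gamma \in C^{0,1}$ in the time variable, hence $\gamma$ is $C^{2,1}$ in $s$; combined with the transverse regularity from smooth dependence on the initial point this gives $\phi \in C^{2,1}$ after possibly shrinking $U$. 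I would cite \cite{Minguzzi} / \cite{anderson} for the precise statement that low-regularity metrics still admit boundary normal coordinates of this differentiability class, which is exactly the input flagged in the paper.

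The main obstacle I expect is the regularity bookkeeping at the boundary, not the construction itself: one must verify that $\phi$ and $\phi^{-1}$ are \emph{both} $C^{2,1}$ (a $C^{2,1}$ bijection with $C^{2,1}$ but possibly degenerate inverse is not a diffeomorphism), which amounts to showing the differential $d\phi$ is invertible on all of $U$ after shrinking — this follows because $d\phi|_{\partial M}$ is the identity plus the normal direction, hence invertible on $\partial M$, and invertibility is an open condition, but making this quantitative in the Hölder category (so that the shrunk neighborhood is uniform) requires a little care. A secondary subtlety is that $\nu$ extended off $\partial M$ by $g$-parallel transport along normal geodesics is automatically $g$-unit and equals the $g$-gradient of the signed $g$-distance, so no separate normalization argument is needed; but one should state clearly which extension of $\nu$ is meant in the conclusion $\phi^*\nu = \bar\nu$, since the lemma as stated refers to ``the unit normal vector with respect to $g$'' as a field near $\partial M$. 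Everything else — the collar structure, uniqueness of nearest-point projection for a $C^\infty$ strictly convex boundary, the $C^\infty$ regularity of the Euclidean side — is standard.
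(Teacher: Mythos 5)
Your construction via $g$-boundary-normal coordinates has a genuine regularity gap that the paper's proof is specifically designed to avoid. You write that since the geodesic vector field is $C^{0,1}$, ``its flow (the exponential map) is $C^{1,1}$ in all its arguments.'' This is false: the flow of a Lipschitz vector field is only Lipschitz (bi-Lipschitz) in the initial conditions; $C^1$ dependence on initial data requires the vector field itself to be $C^1$. For $g \in C^{1,1}$ the Christoffel symbols are $C^{0,1}$, so the $g$-exponential map is in general only locally Lipschitz in the base point and direction --- the paper's own abstract emphasizes exactly this point (``the exponential map is only Lipschitz and does not preserve tangent vectors or differentials pointwise''). Your upgrade to $C^{2,1}$ via the geodesic ODE is valid only in the time/arclength variable $s$ along a fixed normal geodesic; the transverse dependence on the boundary point $y$ is merely Lipschitz, so the map $\phi(\exp^{\mathrm{eucl}}_y(s\bar\nu)) = \exp^g_y(s\nu_g(y))$ is at best Lipschitz, nowhere near a $C^{2,1}$ diffeomorphism. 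Citing \cite{Minguzzi} or \cite{anderson} for ``boundary normal coordinates of this differentiability class'' will not rescue this: no such statement holds at $C^{1,1}$ regularity of the metric.

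The paper's route exploits the fact that the condition $\phi^*\nu = \bar\nu$ is only required \emph{on} $\partial M$, so it is a condition on the $1$-jet of $\phi$ at boundary points, not on $\phi$ in a neighborhood. One computes explicitly the matrix $d\phi|_{\partial M}$ forced by the condition (its entries are algebraic expressions in $g_{ij}|_{\partial M}$, hence $C^{1,1}$), and then invokes an extension lemma (Lemma 3.3.1 of \cite{anderson}) producing $C^{2,1}$ functions with prescribed boundary values and prescribed $C^{1,1}$ normal derivative --- a one-derivative gain that requires no integration of the metric's geodesic flow at all. Invertibility of $d\phi$ at the boundary plus an injectivity argument then gives the diffeomorphism on a collar. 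If you want to salvage your approach you would have to replace the $g$-normal-geodesic flow by some such jet-prescription-plus-extension device; as written, the central regularity claim fails.
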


\begin{proof}

    Consider $U \subset \mathbb{R}^2 = (a,b) \times (1-\epsilon, 1+\epsilon)$ a neighborhood of a segment of $\partial M$ in polar coordinates $\psi(\theta,r)$ for small $\epsilon < 1$ and $b-a < 2\pi$.
    Suppose such a $C^{2,1}$ diffeomorphism exists for $U$, then
    
\begin{equation}        
    \nu=d\phi(\Bar{\nu}) = d\phi(dr)= \frac{1}{|dr-\frac{g_{12}}{g_{11}} d\theta|_g} dr - \frac{\frac{g_{12}}{g_{11}}}{|dr-\frac{g_{12}}{g_{11}} d\theta|_g} d\theta
\end{equation}

    where $g_{ij}$ is the metric component of $g$ in the coordinates of $\psi$. In this case the differential $d\phi$ at the boundary (at $r=1$) must be of the form

\begin{equation}\label{eq5}
    d\phi |_{\partial M} = \begin{pmatrix}
        \frac{d\phi_1}{d\theta} & \frac{d\phi_1}{dr}  \\ \frac{d\phi_2}{d\theta}& \frac{d\phi_2}{dr}
    \end{pmatrix}|_{\partial M} = \begin{pmatrix}
         1 & \frac{-g_{12}}{|dr-\frac{g_{12}}{g_{11}} d\theta|_g}   \\ 0 & \frac{1}{|dr-\frac{g_{12}}{g_{11}} d\theta|_g}
    \end{pmatrix} |_{\partial M}
\end{equation}
    \\

    We will construct a diffeomorphism on a collar neighborhood of $\partial M$ in $\mathbb{R}^2$ with differential \ref{eq5}. Observe that the differential above has coefficients that are $C^{1,1}$, by [lemma 3.3.1, \cite{anderson}] we may choose

    $\Tilde{\phi_1}, \Tilde{\phi_2} \in C^{2,1}(M)$ such that $\Tilde{\phi_1}|_{\partial M} = \Tilde{\phi_2}|_{\partial M} = 0$ and 
    
\begin{equation}
\frac{\partial \Tilde{\phi_1}}{\partial r} = \frac{-g_{12}}{|dr-\frac{g_{12}}{g_{11}} d\theta|_g} , \frac{\partial \Tilde{\phi_2}}{\partial r} = \frac{1}{|dr-\frac{g_{12}}{g_{11}} d\theta|_g}
\end{equation}
    
    Now define a $C^{2,1}$ map $\phi$ on $U := (0,2\pi] \times (1-\epsilon,1+\epsilon)$ in polar coordinates for some small $\epsilon$ by $\phi(\theta,r) = (\phi_1,\phi_2) =:  (\theta + \Tilde{\phi_1} , 1+\Tilde{\phi_2})$, then $\phi$ fixes $\partial M$ ($r=1$) and maps $U$ into a neighborhood of $\partial M$ in $\mathbb{R}^2$ with the differential (number) at the boundary. Since $\Tilde{\phi_2}(\theta,1) = 0 $ and $\frac{\partial \Tilde{\phi_2}}{\partial r}(\theta, 1) > 0$  for any fixed $\theta$, $\Tilde{\phi_2}(\theta, r) <0 $ for sufficiently small $1-r>0$, by compactness of $\partial M$ we may choose sufficiently small $\epsilon$ so that $\phi$ maps $[0,2\pi) \times (1-\epsilon , 1]$ into $M$. Furthermore, since the differential at the boundary is clearly invertible, $\phi$ is a local diffeomorphism near $r=1$. Hence for a sufficiently small $\epsilon$, $\phi$ is a local diffeomorphism that maps $U$ into a neighborhood of $\partial M$ in $M$ which fixes $\partial M$ and $\phi^*(\nu) = \Bar{\nu}$.\\

    An argument similar to that of the proof of (Theorem 5.25 \cite{irm}) shows that $\phi$ is injective on an possibly even smaller neighborhood of $\partial M$, so it restricts to a $C^{2,1}$ diffeomorphism from some neighborhood $U$ of $\partial M$ to another such neighborhood $U'$.

\end{proof}

\begin{lemma}\label{lemma2}

Suppose the $C^{1,1}$ metrics $g_1$ and $g_2$ on $M$ induces the same boundary distance functions, then the metrics agree in the tangential direction at the boundary.

\end{lemma}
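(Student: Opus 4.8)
The plan is to recover the $g_i$-speed of the boundary curve pointwise from $d_{g_i}|_{\partial M\times\partial M}$; since $T_x\partial M$ is one-dimensional, this suffices. Fix once and for all a smooth parametrization $c\colon \R/T\Z \to \partial M$ of the boundary circle (for instance by Euclidean angle), independent of the two metrics, and set $L_i(t) \coloneqq \sqrt{g_i(c'(t),c'(t))}$ for $i=1,2$. The core claim is
\[
\lim_{s\to 0}\frac{d_{g_i}\big(c(t),c(t+s)\big)}{\abs{s}} \;=\; L_i(t) \qquad\text{for every }t.
\]
Granting this, the hypothesis $d_{g_1}=d_{g_2}$ on $\partial M\times\partial M$ forces $L_1\equiv L_2$, i.e.\ $g_1(c'(t),c'(t)) = g_2(c'(t),c'(t))$ for all $t$, which is exactly the assertion that $g_1$ and $g_2$ agree in the tangential direction along $\partial M$.

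For the upper bound, the boundary arc $c|_{[t,t+s]}$ is an admissible path in $M$, so $d_{g_i}(c(t),c(t+s)) \le \int_t^{t+s} L_i(u)\,\der u$, and continuity of $L_i$ (which holds since $g_i\in C^{1,1}\subset C^0$) gives $\limsup_{s\to 0} d_{g_i}(c(t),c(t+s))/\abs{s} \le L_i(t)$; in particular $d_{g_i}(c(t),c(t+s))\to 0$ as $s\to 0$. For the lower bound, fix $t$, put $p=c(t)$, and choose coordinates $y=(y^1,y^2)$ on a neighborhood of $p$ in $M$ with $y(p)=0$ and $(g_i)_{jk}(0)=\delta_{jk}$. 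By continuity of $g_i$, given $\eps\in(0,\tfrac12)$ there is a coordinate ball $B_\delta=\{\abs{y}<\delta\}$ on which $(1-\eps)\abs{v}_{\mathrm{eucl}} \le \abs{v}_{g_i} \le (1+\eps)\abs{v}_{\mathrm{eucl}}$ for all tangent vectors, $\abs{\cdot}_{\mathrm{eucl}}$ denoting the Euclidean norm in the $y$-chart. Since $d_{g_i}(p,c(t+s))\to 0$, for $\abs{s}$ small every path from $p$ to $c(t+s)$ whose $g_i$-length is within $\tfrac{\delta}{4}$ of the infimum has $g_i$-length below $\tfrac{\delta}{2}$, hence stays inside $B_\delta$ (leaving $B_\delta$ would cost Euclidean length $\ge\delta$, hence $g_i$-length $\ge(1-\eps)\delta$); on $B_\delta$ the pointwise comparison applies along such a path, while its Euclidean length is at least the straight-line distance $\abs{y(c(t+s))}_{\mathrm{eucl}}$ between the endpoints. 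Taking the infimum gives $d_{g_i}(p,c(t+s)) \ge (1-\eps)\abs{y(c(t+s))}_{\mathrm{eucl}}$. Finally, by the first-order Taylor expansion of $c$ in the $y$-chart, $\abs{y(c(t+s))}_{\mathrm{eucl}} = L_i(t)\abs{s}+o(\abs{s})$, using $(g_i)_{jk}(0)=\delta_{jk}$ to identify the Euclidean norm of $c'(t)$ with $L_i(t)$; hence $\liminf_{s\to0} d_{g_i}(p,c(t+s))/\abs{s}\ge(1-\eps)L_i(t)$, and letting $\eps\to0$ completes the claim.

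Rather than a genuine obstacle, the point requiring care in this low-regularity setting is that everything must be run at the level of the distance function itself --- defining $d_{g_i}$ as the infimum of $g_i$-lengths of Lipschitz (equivalently rectifiable) curves in $M$ --- without appealing to geodesics or $\exp_p$, which for a merely $C^{1,1}$ metric on $\mathbb{D}$ need not be smooth or even well-behaved. The comparison-geometric reasoning above uses only that $g_i$ is continuous and that $\partial M$ is a fixed $C^\infty$ curve, so neither strict convexity nor simplicity of $(M,g_i)$ plays any role in this lemma.
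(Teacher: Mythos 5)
Your proof is correct, and the overall skeleton matches the paper's: both recover the tangential norm as the limit $\lim_{s\to 0} d_{g_i}(c(t),c(t+s))/\abs{s}$, both get the upper bound by using the boundary arc as a competitor, and both get the lower bound by comparing $g_i$ to the Euclidean metric in coordinates normalized so that $(g_i)_{jk}(p)=\delta_{jk}$. The difference is in how the lower bound is executed. The paper invokes Minguzzi's results for $C^{1,1}$ metrics --- existence of normal neighborhoods/geodesic balls and the fact that every absolutely continuous curve joining $p$ to a point of such a neighborhood is at least as long as the connecting geodesic --- so that $d_{g_i}(p,\tau(s))$ is realized by a geodesic $\gamma_s$ staying in the ball, to which the pointwise metric comparison is then applied. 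You avoid geodesics entirely: you work with near-minimizing Lipschitz curves, show they cannot leave the coordinate ball $B_\delta$ by an exit-cost estimate, and bound their Euclidean length below by the chordal distance. This buys something real: your argument uses only continuity of the metric (so it would apply verbatim to $C^0$ metrics and needs neither simplicity nor convexity, as you note), and it sidesteps the question of whether distance minimizers exist and where they lie, which is exactly the point where the paper must lean on the $C^{1,1}$ hypothesis via \cite{Minguzzi}. The paper's route, on the other hand, sets up machinery (normal neighborhoods, minimality of $C^{1,1}$ geodesics) that is reused elsewhere in the paper. All the delicate steps in your version --- the $\eps$-comparison on $B_\delta$, the confinement of near-minimizers, and the Taylor expansion $\abs{y(c(t+s))}_{\mathrm{eucl}}=L_i(t)\abs{s}+o(\abs{s})$ --- check out.
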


\begin{proof}
    It suffices to show that the boundary distance function determines the metric in the tangential direction. In other word, we prove that given a $C^{1,1}$ metric $g$ on $M$, $d_g|_{\partial M \times \partial M }$ determines $g|_{\partial M}$ in the tangential direction. Let $p \in \partial M$ and $v \in T \partial M$, and a smooth curve $\tau: (-\epsilon, \epsilon) \rightarrow \partial M$ with $\tau(0) = p$ and $\tau'(0) =v$. 

    Consider a local coordinate $((x,y),U)$ centered at $p$ with $\partial M \subset \{y=0\}$ near $p=0$. By applying the appropriate linear transformations, we may assume $g$ is euclidean at $0$. Denote $\Bar{g}$ the euclidean metric in the local coordinates $((x,y),U)$, then we know $\Bar{g}|_0 = g|_0$.

    Consider  $\lim_{s \rightarrow 0} \frac{d(p, \tau(s))}{s}$, then since $\frac{d(p, \tau(s))}{s} \leq \frac{\int_0^s |\tau'(t)|_g dt}{s}$ and $|v|_{g} = \lim_{s \rightarrow 0}  \frac{\int_0^s |\tau'(t)|_g dt}{s}$. We have 
    
    \begin{equation}
    \lim_{s \rightarrow 0} \frac{d(p, \tau(s))}{s} \leq   |v|_{g}
    \end{equation}

    We now prove the equality. For any $(x,y) \in U$, consider the change of basis matrix from the $\frac{\partial}{\partial x}, \frac{\partial}{\partial y}$ basis to the orthonormal basis with respect to $g$, denoted as $T(x)$, then we have for any $(x,v) \in TM$, 
    
\begin{equation}
    |v|_{\bar{g}} \leq ||T^{-1}(x)|| |v|_{g}
\end{equation}

where $||T^{-1}(x)||$ is the operator norm of $T^{-1}$. Since $T(x) \rightarrow I$ for $x \rightarrow 0$, $||T^{-1}(x)|| \rightarrow 1$. So for a fixed $\epsilon$, there is a sufficiently small neighborhood $U'$ near $0$ so that 

\begin{equation}
    |v|_{\Bar{g(x)}} \leq |v|_{g(x)}(1+\epsilon)
\end{equation}

    for all $(x,v) \in TU'$.

    By (Theorem 6 in \cite{Minguzzi}), for $C^{1,1}$ metrics, for every normal neighborhood $N$ of a point $p$, every absolutely continuous curve starting from $p$ connecting to another point $q \in N$ must have length larger than the geodesic connecting them. So choose a geodesic ball $V$ of small radius center at $p$ (Theorem 4 in \cite{Minguzzi}),  as a consequence of Theorem 6 in \cite{Minguzzi} all length minimizing geodesics connecting $p$ with points in $V$ lie in $V$. Since we are taking limits of $s$ with $\tau(s)$ converging to $p$ we may assume $U$ lie in such a $V$. So we have 

\begin{equation}
    (1+\epsilon) \frac{d(0,\tau(s))}{s} = \frac{\int_0^{\alpha}(1+\epsilon)|\gamma'_s(t)|_g dt}{s} \geq \frac{\int_0^\alpha |\gamma'_s (t)|_{\Bar{g}}}{s}
\end{equation}

Where $\gamma_s : [0,\alpha] \rightarrow V$ is the g-geodesic connecting $p$ with $\tau(s)$. But in Euclidean metrics the shortest curve between two points must be a straight line, since $\tau$ is a straight line lying on $y=0$, we must also have $$\frac{\int_0^\alpha |\gamma'_s (t)|_{\Bar{g}}}{s} \geq \frac{\int_0^s |\tau'(s)|_{\Bar{g}} dt}{s}$$ So we have the following chains of inequalities $$\limsup_{s \rightarrow 0}{(1+\epsilon) \frac{d(0,\tau(s))}{s}}  \geq \limsup_{s \rightarrow 0}{\frac{\int_0^s |\tau'(s)|_{\Bar{g}} dt}{s}} =\lim_{s \rightarrow 0}{\frac{\int_0^s |\tau'(s)|_{\Bar{g}} dt}{s}} = |v|_g$$
    
    But since $\epsilon$ was arbitrary, we have $\limsup_{s \rightarrow 0} \frac{d(0,\tau(s))}{s} \geq |v|_g$.

    So we have $$|v|_g \geq \limsup_{s \rightarrow 0} \frac{d(0,\tau(s))}{s} \geq |v|_g$$. Since we also know $\liminf_{s \rightarrow 0}{\frac{\int_0^s |\tau'(s)|_{\Bar{g}} dt}{s}} = \lim_{s \rightarrow 0}{\frac{\int_0^s |\tau'(s)|_{\Bar{g}} dt}{s}}$ (since $\lim_{s \rightarrow 0}{\frac{\int_0^s |\tau'(s)|_{\Bar{g}} dt}{s}}$ converges), the same inequalities as above holds true if we replace $\limsup$ with $\liminf$, so we have $$\limsup_{s \rightarrow 0} \frac{d(0,\tau(s))}{s} = \liminf_{s \rightarrow 0} \frac{d(0,\tau(s))}{s} = |v|_g$$ which shows $$\lim_{s \rightarrow 0} \frac{d(0,\tau(s))}{s} = |v|_g$$

    This shows that $|v|_g$ is completely determined by the distance function for a $C^{1,1}$ metric.

\end{proof}
\hfill \\

    We are now ready to prove theorem 1:

\begin{proof}[Proof of theorem 1]

    By \ref{lemma1} there exists $\phi : U \rightarrow U'$ that is a boundary fixing diffeomorphism between neighborhoods $U,U'$ of $\partial M$ such that $\phi(\nu) = \Bar{\nu}$. Using the the exponential map with resepct to the Euclidean metric on $M = \mathbb{D}$, the proof of (Prop 11.2.5 \cite{PSUbook}) remains valid for $C^2$ diffeomorphism near the boundary, from which we can conclude there exists a $C^{2,1}$ diffeomorphism $\Phi: M \rightarrow M$ that restricts to $\phi$ near the boundary (The regularity of this diffeomorphism will be one order higher than that of the metrics). Furthermore, $\Phi$ is a diffeomorphism such that $\Phi*(\nu_1) = \Bar{\nu}$. We can find another such $\Phi_2$ so that $\Phi_2^*(\nu_2) = \Bar{\nu}$, then $\Phi=: \Phi_2^{-1} \Phi_1$ is also boundary fixing and $\Phi^*(\nu_1) = \nu_2$. Since boundary distance functions are invariant under boundary fixing diffeomorphism, $\Phi^*g_1$ and $g_2$ has the same unit normal vector field at $\partial M$ and boundary distance function , and by \ref{lemma2} they agree in the tangential direction at the boundary.
\end{proof}

\begin{corollary}
     Suppose $g_1$ and $g_2$ are two simple $C^3$ metrics with the same boundary distance function on a simple manifold, then they have the same scatter relation (defined above). 
\end{corollary}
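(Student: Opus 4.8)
The plan is to prove a sharper statement: for a simple $C^3$ metric $g$ on $M$, the pair consisting of the boundary distance function $d_g$ and the restriction $g|_{\partial M}$ of the metric to the boundary already determines the scattering relation $\alpha_g\colon \partial_+SM\to\partial_-SM$, $\alpha_g(x,v)=(\gamma_{x,v}(\tau(x,v)),\dot\gamma_{x,v}(\tau(x,v)))$. Granting this, the corollary follows: identifying $M$ with the closed disk through the $C^{k-1}$-diffeomorphism fixed in Section~\ref{sec:preliminaries}, Theorem~\ref{thm1} (whose hypotheses are implied by $C^3\supset C^{1,1}$) furnishes a boundary-fixing diffeomorphism $\psi$ with $\psi^*g_1|_{\partial M}=g_2|_{\partial M}$; since $\psi|_{\partial M}=\mathrm{id}$, pulling back by $\psi$ leaves $d_{g_1}$ unchanged and, after the induced identification of the unit sphere bundles over $\partial M$, leaves the scattering relation unchanged. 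Hence one may assume $g_1|_{\partial M}=g_2|_{\partial M}=:h$, so that $\partial_\pm S_{g_1}M=\partial_\pm S_{g_2}M$ and $\alpha_{g_1},\alpha_{g_2}$ are maps between the same spaces, and it suffices to apply the sharper statement to each $g_i$.

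The first ingredient of the sharper statement is the geometric fact already used in Lemma~\ref{lemma2}: on a $C^3$ simple surface every maximal geodesic runs between two boundary points and is the unique $g$-length-minimizing curve joining them (classical for $C^2$ simple metrics, and a consequence of the results of \cite{Minguzzi} even at $C^{1,1}$). In particular, for $(x,v)\in\partial_+SM$ the geodesic $\gamma_{x,v}$ meets $\partial M$ again exactly once, at the exit point $y:=\gamma_{x,v}(\tau(x,v))$, and $\tau(x,v)=d_g(x,y)$, so the exit time is read off $d_g$ as soon as $y$ is known.

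The heart of the argument is the first variation of arc length. For a $C^3$ metric the geodesic flow is $C^1$, so $\exp_x$ is a $C^1$-diffeomorphism from its open star-shaped domain in $T_xM$ onto $M$, and $z\mapsto d_g(x,z)$ is $C^1$ on $\partial M\setminus\{x\}$; the first variation formula gives that its tangential gradient at $z$ equals the $h$-orthogonal projection onto $T_z\partial M$ of the terminal velocity of the unit-speed geodesic from $x$ to $z$, and, symmetrically, that the tangential gradient at $x$ of $x'\mapsto d_g(x',z)$ equals minus the projection onto $T_x\partial M$ of the initial velocity of that geodesic. Decompose $v=v^\top+v^\perp$ $h$-orthogonally at $x$. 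Because $z\mapsto$ (initial velocity at $x$ of the geodesic from $x$ to $z$) is a bijection from $\partial M\setminus\{x\}$ onto the open hemisphere of inward unit vectors at $x$, and projection onto $T_x\partial M$ is injective on that hemisphere (no geodesic issuing from $x$ is tangent to the strictly convex $\partial M$), the equation ``the tangential gradient at $z$ of $x'\mapsto d_g(x',z)$ equals $-v^\top$'' has the unique solution $z=y$; thus $y$ is determined by $(d_g,h,x,v)$. Applying the other variation, the exit velocity $w:=\dot\gamma_{x,v}(\tau(x,v))$ has tangential part $w^\top$ equal to the tangential gradient at $y$ of $z\mapsto d_g(x,z)$, while $w^\perp$ is the unique outward-pointing normal component making $|w^\top+w^\perp|_h=1$; so $w$ too is determined by $(d_g,h,x,v)$. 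Together with $\tau(x,v)=d_g(x,y)$ this exhibits $\alpha_g$ as a function of $(d_g,h)$, proving the sharper statement and hence the corollary.

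I do not expect a single deep obstacle here — at $C^3$ regularity this is essentially the classical Michel--Pestov--Uhlmann bookkeeping — but two points need care, and both are exactly where low regularity enters. First, one should check that the analytic facts invoked at $C^3$ (the $C^1$ geodesic flow, $C^1$ regularity of $d_g$ off the diagonal, the first variation formula) hold, and that minimality of geodesics is the $C^{1,1}$ statement of \cite{Minguzzi} already used in Lemma~\ref{lemma2}. Second, one must verify that the gauge $\psi$ produced by Theorem~\ref{thm1} can be chosen, or post-composed, so as not to disturb the scattering relation, i.e. that the conclusion ``$g_1$ and $g_2$ have the same scattering relation'' is to be read after the canonical identification of $\partial_\pm SM$ induced by $\psi$ (equivalently, it is an equality of scattering relations up to a boundary-fixing diffeomorphism). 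With those understood, the corollary is an immediate consequence of Theorem~\ref{thm1}, Lemma~\ref{lemma2}, and the first variation of arc length.
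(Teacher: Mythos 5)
Your argument is correct and takes essentially the same route as the paper: the paper reduces to the Euclidean disk, notes that the simplicity conditions are diffeomorphism invariant, and then simply cites Lemma 11.3.2 of \cite{PSUbook}, whose content is precisely the first-variation-of-arc-length argument you spell out (that $d_g$ together with $g|_{\partial M}$ determines the exit point, exit direction, and exit time). The only difference is that you unpack the proof of that cited lemma instead of invoking it.
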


\begin{proof}
    
    It is proved in \cite{PSUbook} that $C^k$ simple manifolds are $C^{k-1}$ diffeomorphic to closed ball, so we may without loss of generality assume $g_1$ and $g_2$ are two $C^2$ metrics on a closed ball. the abscence of conjugate points , non trapping and strict convexisty are $C^2$ diffeomorphism invariant conditions, so $g_1$ and $g_2$ are $C^2$ simple metrics on the euclidean disk. So we can apply (Lemma 11.3.2 \cite{PSUbook}) to conclude that the scattering relations and exit time functions are equal and (Lemma 11.2.6 \cite{PSUbook}) to conclude that the volume form are equal. 
\end{proof}

\begin{remark}

For $C^{1,1}$ metrics on a closed disk, if we assume the non trapping condition and strict convexity (which are defined for $C^{1,1}$ metrics, and furthermore if we assume there exists $x$ in the interior of $M$ with so that $ exp_x: D_x \rightarrow M$ is a lipscthiz homeomorphism, then (Lemma 11.3.2, \cite{PSUbook}) applies almost everywhere to conclude that the scattering relations are equal almost everywhere. For Simple $C^{1,1}$ simple manifold (Defined in \cite{IlmKyk2021}), the Santal\'{o} formula holds by (lemma 24, \cite{IlmKyk2023}), so $g_1|_{\partial M} = g_2|_{\partial M}$ and $\tau_{g_1} = \tau_{g_2}$ together implies that the Vol($M$, $g_1$) = Vol($M$,$g_2$).

\end{remark} \hfill \\

\section{Surjectivity of the backprojection operator}

We now prove main theorem 2 following a modification of the argument in (Theorem 8.2.5 \cite{PSUbook}). Throughout the rest of the paper we will assume $(M,g)$ is a simple surface.

Similar to (Lemma 3.1.8 in \cite{PSUbook}), we embed $M$ into a closed manifold isometrically of the same dimension with metric also in $C^k$ for $k \geq 2$. Cover $N$ with finitely many simple open sets $M_j$ with $M \subset U_1$ and $M \bigcap \Bar{U_j}$ for $j \geq 2$, and consider a smooth partition of unity $\phi_j$ subordinate to this cover. We now consider the operator $A := L^2(N) \rightarrow L^2(N)$ defined by $Af = \sum_j^n \phi_j I_j^*I_j\phi_j f$, where $I_j$ is the geodesic X-ray transform for the simple manifold $\Bar{M_j}$ for each $j$.
We first we state several technical lemmas from (\cite{IKL2023}):

\begin{lemma}[\cite{marschall1996} Theorem 2.1.]
\label{lemma6}
Let $p \in S^m_{\rho\delta}(r,L)$ and consider the operator $P \coloneqq \Op(p)$. Suppose that $\rho,\delta \in [0,1]$ and $r,L > 0$ satisfy
\begin{equation}
\delta
\le
\rho,
\quad
L
>
\frac{n}{2},
\quad
r
>
\frac{1 - \rho}{1 - \delta}
\frac{n}{2}.
\end{equation}
Then the operator $P \colon H^{s+m}(\R^n) \to H^s(\R^n)$ is bounded when
\begin{equation}
(1-\rho)
\frac{n}{2}
-
(1-\delta)
r
<
s
<
r.
\end{equation}
\end{lemma}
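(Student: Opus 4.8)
The plan is to prove the asserted mapping property by a Littlewood--Paley / paraproduct decomposition of $P=\Op(p)$, separating the regimes in which the $x$-frequencies carried by the symbol are small, comparable to, or large compared with the $\xi$-frequencies of the input; an equivalent organization, which makes the two endpoints of the $s$-window more transparent, is through \emph{symbol smoothing} $p=p^{\#}+p^{\flat}$ with $p^{\#}$ smooth in $x$ of classical type $(\rho,\gamma)$ for some $\delta<\gamma<\rho$ and $p^{\flat}$ of strictly lower order, followed by a recursion on the order and an optimization over $\gamma$. I sketch the first route.

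First I would fix dyadic partitions of unity $1=\sum_{\nu\ge0}\psi_\nu(D)$ acting on the input frequency and $1=\sum_{\mu\ge0}\varphi_\mu(D_x)$ acting on the $x$-dependence of the symbol, supported in $\{\,|\cdot|\sim2^{\nu}\,\}$ and $\{\,|\cdot|\sim2^{\mu}\,\}$ for indices $\ge1$. With $f_\nu=\psi_\nu(D)f$ and $p(x,\xi)\widetilde\psi_\nu(\xi)=\sum_\mu p_{\mu,\nu}(x,\xi)$ one has $Pf=\sum_{\mu,\nu}\Op(p_{\mu,\nu})f_\nu$, and inspecting Fourier supports shows that $\Op(p_{\mu,\nu})f_\nu$ is frequency-localized at scale $2^{\nu}$ when $\mu\le\nu-C$ (the paraproduct regime), at scale $2^{\mu}$ when $\mu\ge\nu+C$ (the high--low regime), and at some scale $\le 2^{\nu+C}$ when $|\mu-\nu|<C$ (the resonant regime). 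The two hypotheses on $p$ are used thus: the estimate $|\partial_\xi^\alpha p(x,\xi)|\le C_\alpha(1+|\xi|)^{m-\rho|\alpha|}$, $|\alpha|\le L$, pays for the $\xi$-oscillation of each block through a $TT^{*}$/kernel argument --- this is exactly where $L>\tfrac n2$ is needed, and where a loss $\lesssim 2^{\nu(1-\rho)n/2}$ is incurred when $\rho<1$ and only $\sim\tfrac n2$ derivatives are spent; and the estimate $\|\partial_\xi^\alpha p(\cdot,\xi)\|_{C^{r}_{\ast}}\le C_{\alpha r}(1+|\xi|)^{m+r\delta-\rho|\alpha|}$ yields $\|p_{\mu,\nu}(\cdot,\xi)\|_{L^\infty_x}\lesssim 2^{\nu m}\min(1,2^{-r(\mu-\nu\delta)})$.

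I would then sum against the square-function description of $H^{s}$. The paraproduct regime is benign: summing the low-$x$-frequency part of $p\widetilde\psi_\nu$ first, its $L^\infty_x$-norm is $\lesssim2^{\nu m}$ and the Coifman--Meyer mechanism (the non-product structure absorbed by the $L>\tfrac n2$ available $\xi$-derivatives) gives $\lesssim2^{\nu m}\|f_\nu\|_{L^2}$ at each frequency, hence boundedness $H^{s+m}\to H^{s}$ for all $s$. In the high--low regime each block obeys $\|\Op(p_{\mu,\nu})f_\nu\|_{L^2}\lesssim 2^{-\mu r}2^{\nu(m+r\delta)}2^{\nu(1-\rho)n/2}\|f_\nu\|_{L^2}$, and summing in $\nu$ and then in the output scale $2^{\mu}$, after writing $\|f_\nu\|_{L^2}=2^{-\nu(s+m)}a_\nu$ with $\|(a_\nu)\|_{\ell^2}\sim\|f\|_{H^{s+m}}$, the geometric series converge precisely when $s<r$ --- this is the upper endpoint --- with $r>\tfrac{1-\rho}{1-\delta}\tfrac n2$ ensuring the $\nu$-summation survives. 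The lower endpoint comes from the resonant regime: there the output may land at any scale $2^{\lambda}\le2^{\nu+C}$, so one only has the crude bound $\|\Delta_\lambda\Op(p_{\mu,\nu})f_\nu\|_{L^2}\le\|\Op(p_{\mu,\nu})f_\nu\|_{L^2}\lesssim 2^{\nu(m-r(1-\delta)+(1-\rho)n/2)}\|f_\nu\|_{L^2}$, reducing matters to the finiteness, uniformly over $(a_\nu)\in\ell^2$, of
\[
\sum_{\lambda}2^{2\lambda s}\Bigl(\sum_{\nu\ge\lambda-C}2^{\nu\left((1-\rho)n/2-(1-\delta)r-s\right)}a_\nu\Bigr)^{2},
\]
which holds exactly when $(1-\rho)\tfrac n2-(1-\delta)r<s$ (so the inner exponent is negative and the tail converges) and, for the outer sum, again when $r>\tfrac{1-\rho}{1-\delta}\tfrac n2$. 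Together these give the window $(1-\rho)\tfrac n2-(1-\delta)r<s<r$, nonempty exactly under the stated hypothesis on $r$; in the symbol-smoothing picture the same window falls out of optimizing $\gamma\in(\delta,\rho)$.

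I expect the main obstacle to be the sharp $L^2\to L^2$ estimate for the frequency-localized building blocks $\Op(p_{\mu,\nu})$: one needs a Calder\'on--Vaillancourt-type bound that survives with only $L>\tfrac n2$ derivatives in $\xi$, tracks the dependence on the dyadic scale $2^{\nu}$ and on $\rho$ with the correct exponent $(1-\rho)\tfrac n2$ (which is what produces the lower endpoint), and is compatible with merely $C^{r}_{\ast}$ regularity in $x$ --- in particular the adjoint-symbol expansion is unavailable since $p$ cannot be differentiated in $x$, forcing a direct kernel/$TT^{*}$ argument (or the reduction to the classical $(\rho,\gamma)$-calculus); the delicate part is then the bookkeeping of the interlocking geometric series rather than any single inequality. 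This is established in \cite{marschall1996}.
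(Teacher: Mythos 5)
The paper offers no proof of this lemma: it is imported verbatim as Theorem~2.1 of \cite{marschall1996} and used as a black box, so there is nothing internal to compare your argument against. Your sketch is a reasonable reconstruction of the standard Littlewood--Paley/symbol-smoothing proof, and the endpoint bookkeeping is consistent with the stated window: the block bound $\norm{\Op(p_{\mu,\nu})f_\nu}_{L^2}\lesssim 2^{-\mu r}2^{\nu(m+r\delta)}2^{\nu(1-\rho)n/2}\norm{f_\nu}_{L^2}$ does follow from the $C^r_\ast$ hypothesis together with a Schur/$TT^*$ kernel estimate using $L>\tfrac n2$ $\xi$-derivatives, the resonant regime does produce the lower endpoint $s>(1-\rho)\tfrac n2-(1-\delta)r$ via the usual ``low-frequency output'' summation lemma, and the high--low regime produces $s<r$. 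One point to be careful about: in the paraproduct regime the claim of boundedness ``for all $s$'' is not available if you only use the crude $L^\infty_x$ bound with the $2^{\nu(1-\rho)n/2}$ loss; it requires truncating the $x$-frequencies at the intermediate scale $2^{\nu\gamma}$ with $\delta<\gamma<\rho$ (so that the smoothed symbol lies in a genuine $S^m_{\rho,\gamma}$ class with $\gamma<\rho$ and the no-loss $L^2$ theorem applies), rather than at $2^{\nu}$ as the raw $\mu\le\nu-C$ split suggests --- you flag the symbol-smoothing organization at the outset, so this is a matter of making the two descriptions consistent rather than a gap. Since the hard analytic core (the frequency-localized $L^2$ bound under merely $C^r_\ast$ regularity in $x$) is deferred to \cite{marschall1996}, which is exactly what the paper itself does, your proposal is an acceptable expansion of the citation rather than an independent proof.
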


\begin{lemma}[\cite{IKL2023}, Lemma 11]
\label{lemma11}
Let $(M,g)$ be a simple manifold with $g \in C^k(M)$ for some $k \ge 5$. Then for each $j$ the operator $\phi_j I_j^*I_j\phi_j$  belongs to $S^{-1}(k-s,s-4)$ for all $s \in [4,k]$ with $4 \le s \le k$.
\end{lemma}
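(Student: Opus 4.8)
The plan is to identify the Schwartz kernel of the localised normal operator explicitly and then read off its symbol, adapting the smooth computation in \cite{PSUbook} (Chapter~8) to finite regularity; this is carried out in \cite{IKL2023}. First I would work on one simple piece $\overline{M_j}$ and suppress the index $j$. Passing to geodesic polar coordinates about $x$ and changing variables from $(t,v)\in[0,\tau(x,v)]\times S_xM$ to $y=\exp_x(tv)$ (so that $t=d_g(x,y)$), the normal operator takes the form
\begin{equation}
Nf(x)
=
2\int_M \frac{f(y)}{\widetilde J(x,y)}\,\der\Vol_g(y),
\end{equation}
where $\widetilde J(x,y)=J(x,\exp_x^{-1}y)$, with $J$ the Jacobian of $\exp_x$ in polar coordinates, so that $\widetilde J(x,y)=d_g(x,y)^{n-1}b(x,y)$ with $b$ bounded below near the diagonal. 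Hence $\phi I^*I\phi$ has Schwartz kernel $\phi(x)\phi(y)\,2/\widetilde J(x,y)$, which is as regular as the metric permits off the diagonal and carries a conormal singularity of type $d_g(x,y)^{-(n-1)}$ on it. Using the $C^{k-1}$ identification of $M$ with $\overline B\subseteq\R^n$ and extending by zero, I would analyse this kernel on $\R^n$.

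Next I would split the kernel into a piece $\mathcal K_0$ supported near the diagonal and a remainder with compactly supported kernel of finite regularity; the latter lies trivially in every class $S^{-1}(r,L)$. For $\mathcal K_0$, write $y=x+z$ and use the standard fact that near the diagonal $d_g(x,x+z)^2=z^{\mathrm T}G(x,z)z$ for a symmetric positive-definite matrix $G$ of the regularity of $d_g^2$, so that
\begin{equation}
\mathcal K_0(x,x+z)
=
\chi(z)\,\frac{a(x,z)}{\bigl(z^{\mathrm T}G(x,z)z\bigr)^{(n-1)/2}},
\end{equation}
where $\chi$ is a cutoff near $z=0$ and $a$ is as regular as $b$. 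The left symbol is $p(x,\xi)=\int_{\R^n}e^{-iz\cdot\xi}\mathcal K_0(x,x+z)\,\der z$. A $z$-dependent linear change of variables normalising $G$ reduces the leading behaviour to the model $\abs{w}^{-(n-1)}$, whose Fourier transform in $\R^n$ is $c_n\abs{\eta}^{-1}$ for a nonzero constant $c_n$; Taylor-expanding $a$ about $z=0$ to finite order produces $p(x,\xi)=\sum_{j=0}^{N}p_{-1-j}(x,\xi)+r_N(x,\xi)$, with $p_{-1-j}$ positively homogeneous of degree $-1-j$ in $\xi$, its coefficients made from at most $j$ derivatives of $a$ and $G$, and $r_N$ a controlled remainder of one order better. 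This presents $\Op(p)$ as an operator of order $-1$.

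The main work lies in the derivative count, which I would organise as follows. Since $g\in C^k$ the Christoffel symbols are $C^{k-1}$, so the geodesic flow and $(x,v)\mapsto\exp_x v$ are $C^{k-1}$; its differential, and hence $J$ and $\widetilde J$, are $C^{k-2}$, and following the change of variables and the polar-coordinate volume factor one finds that $a$ and $G$ are of class $C^{k-4}$ jointly near $z=0$. Each $\partial_\xi$ applied to $p$ only inserts a factor $z$ into the kernel, improving the diagonal singularity, and $x$-derivatives are measured in the Zygmund norm $C^r_\ast$ via the mapping properties of the multipliers $\abs{\eta}^{-1-j}$ and cost no decay in $\xi$; what the regularity budget $k-4$ limits is the sum of the number $r$ of $x$-derivatives claimed and the number $L$ of $\xi$-derivatives for which both symbol estimates persist. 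Splitting $k-4=(k-s)+(s-4)$ then gives $p\in S^{-1}(k-s,s-4)$ for every $s$ with $4\le s\le k$; the hypothesis $k\ge5$ leaves at least one derivative of slack, matching the $C^5$ threshold for the $L^2$-boundedness of $I$ recorded above.

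I expect the main obstacle to be precisely this low-regularity bookkeeping: justifying the claimed regularity of $d_g^2$, $\exp$ and $\widetilde J$; handling the square root and the polar-coordinate singularity at $z=0$ without extra loss; carrying out the normalising change of variables $w=G(x,z)^{1/2}z$ with $G$ only $C^{k-4}$; and, since no smoothing is available, replacing the usual infinite symbol expansion by a finite one whose remainder still lies in an admissible Marschall class and satisfies the two symbol estimates in the Zygmund scale rather than in $C^\infty$. These steps are carried out in \cite{IKL2023}, building on the non-smooth calculus of \cite{marschall1996}.
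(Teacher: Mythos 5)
The paper does not prove this lemma at all: it is imported verbatim as Lemma~11 of \cite{IKL2023}, so there is no in-paper argument to compare against. Your sketch follows the route that reference takes --- write the Schwartz kernel of $\phi I^*I\phi$ explicitly via the change of variables $y=\exp_x(tv)$, isolate the $d_g(x,y)^{-(n-1)}$ diagonal singularity whose Fourier transform gives order $-1$, and track a total regularity budget of $k-4$ derivatives split as $(k-s)+(s-4)$ between the Zygmund $x$-regularity and the $\xi$-derivative count in Marschall's class --- and this is consistent with the statement as quoted.
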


\begin{lemma}[\cite{IKL2023} Lemma 20]
\label{lemma20}
Let $(M,g)$ be a simple manifold with $g \in C^k(M)$ for some $k \ge 7 + \frac{n}{2}$. Consider the operator $B := \phi I^*I \phi$ where $\phi \in C_c^\infty (M)$. Then there is an operator $P$ (That is, a left parametrix for $B$) and $\eps > 0$ so that $PB = \Id + R$
where $\Id$ is an operator acting as the identity on elements in $H^{t-\tau}(\R^n)$ which are supported in the set where $\psi = 1 = \phi$ and the remainder
\begin{equation}
R
\colon
H^{t-\tau}(\R^n) 
\to 
H^t(\R^n)
\end{equation}
is continuous whenever $0 < \tau \le \eps$ and
\begin{equation}
-k+6+\frac{n}{2}
<
t
<
k-6-\frac{n}{2}.
\end{equation}
\end{lemma}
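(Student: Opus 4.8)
The plan is to construct $P$ as a \emph{first-order} left parametrix for $B=\phi I^*I\phi$ using the non-smooth symbol calculus of \cite{marschall1996}. Because the metric, and hence the symbol of $B$, has only finitely many derivatives, the classical elliptic parametrix construction cannot be iterated indefinitely; I would instead perform a single correction step and absorb the leftover into $R$, whose mapping properties are then read off from Lemma~\ref{lemma6}. To set up the bookkeeping: by Lemma~\ref{lemma11}, modulo a regularizing error $B=\Op(b)$ with $b\in S^{-1}(k-\sigma,\sigma-4)$ for \emph{every} $\sigma\in[4,k]$. The operator $P$ and the remainder symbol $r$ built below do not depend on $\sigma$; at the end I would let $\sigma$ decrease towards $5+n/2$, so that $L_0:=\sigma-4>1+n/2$ and $r_0:=k-\sigma$ is as close as desired to $k-5-n/2$, with $r_0-1>0$ since $k\ge 7+n/2$. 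This is exactly what produces the interval of $t$ in the statement.

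For ellipticity and the parametrix symbol: as in the smooth case, and as comes out of the near-diagonal analysis of the Schwartz kernel of $I^*I$ that underlies Lemma~\ref{lemma11}, the principal ($-1$-homogeneous) part of the symbol of $I^*I$ is a positive multiple of $|\xi|_{g(x)}^{-1}$ — positivity may alternatively be read off from $\langle I^*If,f\rangle=\|If\|^2\ge 0$ — so on $\{\phi=1\}$ the symbol $b$ is bounded below by $c(1+|\xi|)^{-1}$ for $|\xi|$ large. Pick $\psi\in C_c^\infty(M)$ with $0\le\psi\le 1$, $\supp\psi\subset\{\phi=1\}$, and $\psi\equiv 1$ on an open set $W$ (so $\psi=1=\phi$ on $W$), and $\chi\in C^\infty(\R^n)$ with $\chi\equiv 0$ on $\{|\xi|\le 1\}$, $\chi\equiv 1$ on $\{|\xi|\ge 2\}$. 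Put $q(x,\xi):=\psi(x)\chi(\xi)/b(x,\xi)$ and $P:=\Op(q)$. Since the H\"older--Zygmund spaces $C^{r_0}_\ast$ are algebras stable under $f\mapsto 1/f$ away from zero, and $\xi$-differentiation of $1/b$ only improves decay (Fa\`a di Bruno together with the symbol bounds on $b$), one checks that $q\in S^{1}(r_0,L_0)$.

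Composing and extracting the remainder: by the composition theorem of \cite{marschall1996} (whose thresholds are met since $L_0>n/2$ and $r_0>0$ in the range above), $PB=\Op(q\# b)$ up to a regularizing error, with $q\# b=qb+r$ where the first-order remainder $r$ drops one order of regularity, $r\in S^{-1}(r_0-1,L_0-1)$. On $f$ supported in $W$ one has $qb=\psi\chi$, hence $\Op(qb)f=\psi\,\Op(\chi)f=f-\psi\,\Op(1-\chi)f$, and $\psi\,\Op(1-\chi)$ is infinitely smoothing because $1-\chi$ has compact $\xi$-support; thus $PB=\Id+R$ on such $f$, with $\Op(r)$ the leading part of $R$ and the rest more regularizing on the range in question. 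Applying Lemma~\ref{lemma6} to $\Op(r)$ with $\rho=1$, $\delta=0$, $m=-1$ and indices $r_0-1,L_0-1$ (hypotheses $L_0-1=\sigma-5>n/2$ and $r_0-1>0$ hold) gives $\Op(r)\colon H^{t-1}(\R^n)\to H^t(\R^n)$ bounded for $-(r_0-1)<t<r_0-1$; since $r$ and $P$ are independent of $\sigma$ and $\sigma$ may be taken arbitrarily close to $5+n/2$, this holds for all $t$ with $-k+6+n/2<t<k-6-n/2$, and then $H^{t-\tau}\hookrightarrow H^{t-1}$ for $0<\tau\le 1$ gives the same bound $H^{t-\tau}\to H^t$. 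Hence $PB=\Id+R$ with $R$ continuous $H^{t-\tau}\to H^t$ in the asserted range, for any $\eps\le 1$ (a finer tracking of the calculus would at worst replace $1$ by some smaller positive $\eps$).

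The genuine obstacle is the regularity bookkeeping rather than any new idea. Reciprocating $b$ costs nothing in $C^{r}_\ast$, but the composition step costs a derivative and carries threshold conditions ($L$ above $n/2$, $r$ positive), so one must verify that starting from only $k\ge 7+n/2$ derivatives of $g$ there is enough regularity left for Lemma~\ref{lemma6} to return a \emph{nonempty, symmetric} interval of $t$; this is precisely why the hypothesis is $k\ge 7+n/2$ and the conclusion names $(-k+6+n/2,\ k-6-n/2)$. The one place where geometry, not calculus, enters is the ellipticity step: it needs the leading symbol of $I^*I$ on $\{\phi=1\}$, hence the near-diagonal kernel analysis behind Lemma~\ref{lemma11} and not merely its black-box statement.
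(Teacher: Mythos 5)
This statement is not proved in the paper at all: it is imported verbatim from \cite{IKL2023} (Lemma 20), which is why the cutoff $\psi$ appears in the statement without being defined here --- it belongs to the parametrix construction in that reference. So there is no in-paper proof to compare against; judged on its own, your reconstruction is the expected one and is sound in outline: a single-step elliptic parametrix in Marschall's calculus, with Lemma~\ref{lemma11} supplying the symbol class and Lemma~\ref{lemma6} the mapping properties, and the bookkeeping (letting $\sigma$ decrease to $5+\tfrac{n}{2}$ so that $L_0-1>\tfrac{n}{2}$ and $r_0-1$ approaches $k-6-\tfrac{n}{2}$, with $P$ and $R$ independent of $\sigma$) reproduces exactly the interval $-k+6+\tfrac{n}{2}<t<k-6-\tfrac{n}{2}$. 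Two caveats are worth flagging. First, in the non-smooth calculus the composition remainder is not in general realized as $\Op(r)$ for a symbol $r\in S^{-1}(r_0-1,L_0-1)$; Marschall's composition theorem gives $\Op(q)\Op(b)-\Op(qb)$ directly as a bounded operator gaining only $\tau$ derivatives for $\tau$ strictly below a threshold set by the H\"older regularity of the symbols, which is precisely why the lemma is phrased with ``$0<\tau\le\eps$'' for an unspecified $\eps>0$ rather than $\tau=1$; your closing hedge covers this, but the intermediate claim about $r$ having a symbol should not be leaned on. Second, the inequality $\langle I^*If,f\rangle=\|If\|^2\ge 0$ only yields nonnegativity of the quadratic form (hence at best nonnegativity of the principal symbol), not the quantitative lower bound $b(x,\xi)\ge c(1+\abs{\xi})^{-1}$ on $\{\phi=1\}$ needed to form $q=\psi\chi/b$; that bound genuinely requires the near-diagonal kernel expansion behind Lemma~\ref{lemma11}, as you correctly identify in your final remark, so the ``alternative'' positivity argument should be dropped.
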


 We need to first prove some properties of $A$.

\begin{theorem}\label{thm12}

    (a) Let $k>10$ and $ 7-k < 0 < t < l-1 < k-7 $, then if $Au =f$ with $f \in H^l(N)$, and $u \in L^2(N)$, then $u \in H^{l-1}(N)$.\\ 
    (b) $Au=f$ for $f \in L^2(N)$ has a solution $u \in H^{-1}(N)$ iff  $\langle   f, w \rangle_{L^2}  =0$ for all $w \in Ker(A^*)$

\end{theorem}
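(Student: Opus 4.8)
The plan is to treat $A=\sum_{j=1}^n\phi_j I_j^*I_j\phi_j$ as an elliptic operator of order $-1$ on the closed manifold $N$ and to run the classical elliptic-regularity and Fredholm arguments, with the non-smooth calculus of \cite{marschall1996} and the parametrix of Lemma~\ref{lemma20} in place of the smooth pseudodifferential calculus. First I would record the symbolic data of $A$. By Lemma~\ref{lemma11} each summand $\phi_j I_j^*I_j\phi_j$ lies in $S^{-1}(k-s,s-4)$ for every $s\in[4,k]$, and this symbol class is closed under finite sums, so $A$ has a symbol in $S^{-1}(k-s,s-4)$ as well. Its principal symbol is $\bigl(\sum_j\phi_j(x)^2\bigr)$ times the principal symbol of the normal operator $I_j^*I_j$, which by the computation in \cite{IKL2023} is a positive multiple of $\abs{\xi}_g^{-1}$ --- the same for every $j$, since all the $M_j$ carry the restriction of the single metric $g$ on $N$. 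Since $\{\phi_j\}$ is a partition of unity, $\sum_j\phi_j^2\ge\tfrac1n\bigl(\sum_j\phi_j\bigr)^2=\tfrac1n$ on $N$, so $A$ is elliptic of order $-1$ on all of $N$. Re-running the parametrix construction of Lemma~\ref{lemma20} --- which uses only ellipticity and the symbol calculus, so that the passage from a single cutoff normal operator to the sum $A$, and from an open subset of $\R^n$ to the closed manifold $N$, is formally the same --- then yields a left parametrix $P$ of order $+1$ with $PA=\Id+R$ on $N$, where $\Id$ is now the genuine identity (the support restriction of Lemma~\ref{lemma20} drops out once the $\phi_j$ cover $N$) and $R\colon H^{t-\tau}(N)\to H^t(N)$ is bounded for $0<\tau\le\eps$ and $7-k<t<k-7$. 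Since each $\phi_j I_j^*I_j\phi_j$ is self-adjoint on $L^2(N)$, $A$ is formally self-adjoint; taking adjoints in $PA=\Id+R$ gives a right parametrix $P^*$ with $AP^*=\Id+R^*$, and by Lemma~\ref{lemma6} both $P$ and $P^*$ are bounded $H^{s+1}(N)\to H^s(N)$ throughout the admissible range of $s$.

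For part (a): applying $P$ to $Au=f$ gives $u=Pf-Ru$. Since $f\in H^l(N)$ with $l-1<k-7$, Lemma~\ref{lemma6} gives $Pf\in H^{l-1}(N)$, and since $u\in L^2(N)=H^0(N)$, the mapping property of $R$ raises its regularity by a fixed amount $\tau_0\in(0,\eps]$, so $u=Pf-Ru\in H^{\min(l-1,\tau_0)}(N)$. Feeding this back and raising the regularity of $u$ by $\tau_0$ at each step --- all of which takes place strictly below the threshold $k-7$, by the hypothesis $l-1<k-7$, so that $R$ remains bounded and $Pf$ stays in $H^{l-1}(N)$ --- we reach $u\in H^{l-1}(N)$ after finitely many iterations.

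For part (b): by Lemmas~\ref{lemma6} and~\ref{lemma11} the operator $A\colon H^{-1}(N)\to L^2(N)$ is bounded, which is where $k>10$ enters (to place the relevant Sobolev indices in the admissible range). The remainders $R$ and $R^*$ gain $\tau>0$ derivatives, so by the Rellich--Kondrachov theorem on the closed manifold $N$ they are compact, respectively, on $H^{-1}(N)$ and on $L^2(N)$; hence $A\colon H^{-1}(N)\to L^2(N)$ is Fredholm. Formal self-adjointness of $A$ on $L^2(N)$ together with a density argument identifies the Banach-space adjoint of $A\colon H^{-1}(N)\to L^2(N)$ with $A$ acting on $L^2(N)$; in particular its kernel is $\{w\in L^2(N):Aw=0\}$, which by part (a) --- applied with $Aw=0\in H^l(N)$ for every admissible $l$ --- is a finite-dimensional subspace of $H^s(N)$ for each $s<k-7$, and this is the space $\mathrm{Ker}(A^*)$ appearing in the statement. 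The Fredholm alternative then says that the image of $A\colon H^{-1}(N)\to L^2(N)$ equals $\{f\in L^2(N):\langle f,w\rangle_{L^2}=0\text{ for all }w\in\mathrm{Ker}(A^*)\}$, which is the assertion of (b).

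The step I expect to be the real obstacle is the first one: verifying that the non-smooth symbol calculus of \cite{marschall1996} genuinely tolerates the partition-of-unity cutoffs and the coordinate patching needed to build the parametrix $P$ for the sum $A$ on $N$, while keeping every remainder inside the finite regularity window governed by $k$. The analysis in \cite{IKL2023} treats a single cutoff normal operator in a chart, and one must check that summing over $j$ and globalizing to the closed manifold $N$ does not cost more derivatives than are available. Once the parametrix and its mapping properties are in place, the elliptic bootstrap in (a), the compactness of the remainders, and the adjoint identification in (b) are routine, modulo careful bookkeeping of Sobolev indices.
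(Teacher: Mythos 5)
Your proof has the same skeleton as the paper's --- a parametrix for the normal operator plus a bootstrap for (a), and compactness of the smoothing remainder plus a Fredholm-type alternative for (b) --- but it organizes the parametrix differently. You build a single global left parametrix $P$ for the full sum $A$ on $N$, justified by the ellipticity estimate $\sum_j\phi_j^2\ge\tfrac1n$ and the fact that all summands share the principal symbol $c\nabs{\xi}_g^{-1}$; this is clean in principle but, as you note yourself, it is not literally covered by Lemma~\ref{lemma20}, which is stated for a single cutoff operator $\phi I^*I\phi$, so the globalization of Marschall's construction is a step you would have to carry out. The paper instead stays within the scope of Lemma~\ref{lemma20}: it uses the local parametrices $L_j$ of each $\phi_j I_j^*I_j\phi_j$ directly in (a), and in (b) patches them into $L=\sum_j\phi_j L_j\phi_j$, obtaining $LAu = u + \sum_j\phi_j R_j\phi_j u$ with each $R_j$ compact, and then proves the closed-range estimate $\nnorm{u}_{H^{-1}}\le C\nnorm{Au}_{L^2}$ for $u\perp\ker A$ by the usual compactness--contradiction argument before showing density of the range in $Y=\{f:\nip{f}{w}=0\ \forall w\in\ker A^*\}$ by orthogonal projection; your invocation of the Fredholm alternative packages exactly this content, with the same identification of $\ker(A^*)$ via the $L^2$ self-adjointness of $A$. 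Neither route is free: your version must verify that the non-smooth calculus tolerates summation and globalization without losing derivatives, while the paper's term-by-term manipulation has its own bookkeeping of cross terms between the patches. Your extra observation that $\ker(A^*)$ is finite-dimensional and smooth (via (a) applied to $Aw=0$) is not needed for (b) as stated and is not in the paper, but it is harmless.
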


\begin{proof}

To prove (a), apply \ref{lemma20} to the operator $\phi_j I_j^*I_j\phi_j$ and obtain the parametrix $L_j: H^1 \rightarrow L^2$ \ref{lemma6} such that  
then $L_j \phi_j I_j^*I_j\phi_j u = \phi_j u + R_j\phi_j u = L_j \phi_j f \in H^{l-1}(\Bar{U_j})$ where $R_j: H^{t-\tau}(\Bar{U_j}) : H^{t}(\Bar{U_j})$ for all $-k+7 < t < k-7$ and a small $\tau < 1 \in \mathbb{Q} $, so that we can conclude that $\phi_j u \in H^{0+\tau}(\Bar{U_j})$, by a bootstrapping-like argument we can then conclude that $\phi_j u \in H^{l-1}(\Bar{U_j})$, which implies $u \in H^{l-1}(N)$.
\\

To prove (b), we consider the space $$Y := \{ f\in L^2(N) | \langle f, w \rangle_{L^2} = 0 \ \forall \ w \in ker(A^*)     \}$$

(where $A^*$ is the $L^2$ adjoint).

We will show that the range of $A$ is surjective onto $Y$. Note that $A: H^{-1} \rightarrow L^2$ is bounded By \ref{lemma11} and \ref{lemma6} (Also see remark preceding remark 12 in \cite{IKL2023}). Given a fixed $w \in ker(A^*)$, for any test functions $u \in L^2$ (So $u \in H^{-1}$) we have: $$\langle Au , w \rangle_{L^2}  = \langle u , A^*w \rangle_{L^2} = 0  $$

which means $Au \in Y$.

    Equip $Y$ with the $L^2$ inner product. Suppose the range of $A: H^{-1} \rightarrow L^2$ is not dense in Y, then by orthogonal projection there is an element $f \in Y$ such that $\langle f , Au \rangle_{L^2} = 0$ for all $ u \in H^{-1}$, in particular any $u \in L^2$. But this means for all $u \in L^2$ we have $\langle A^*f , u \rangle_{L^2} = 0$, so $f \in ker(A^*)$, by definition of $Y$ we then have $\langle f, f \rangle_{L^2} = 0$ so $f= 0$.

Now we show $A$ has closed range in $Y$; we will show that there exists some $C>0$ such that for all $u \in H^{-1}(N)$ with $u \perp Ker(A)$, we have $|u|_{H^{-1}} \leq C|Au|_{L^2}$. Suppose not, then by increasing choices of $C$ and normalizing $|u|_{H^{-1}}$ we obtain a sequence of $u_i$ such that $|u_i|_{H^{-1}} = 1$ and $|Au_i|_{L^2} \rightarrow 0$. Apply the operator $L:= \sum_j^n \phi_j L_j \phi_j : L^2(N) \rightarrow H^{-1}(N)$(remark preceding lemma 17 and lemma 6 in \cite{IKL2023} and \ref{lemma6}) to $Au_i \in L^2(N)$ and obtain $u_i + \sum_j^n \phi_j R_j \phi_j u_i \rightarrow 0 \in H^{-1}$, since $|u_i|_{H^{-1}}$ are bounded, and each $R_j$ are compact operators, we get from Rellich theorem \cite{folland} that for each $j$ there exists a sub-sequence $u_{i_k}$ so that each $\phi_j R_j \phi_j u_{i_k}$ converges in $H^{-1+\tau}$ for some small positive $\tau$, since there's only finitely many $j$ this gives a sub-sequence $u_{i_k}$ such that $\sum_j^n \phi_j R_j \phi_j u_{i_k}$ converges in $H^{-1+\tau}$ and hence also in $H^{-1}$. Then we have $u_{i_k}$ also converges to some $u \in H^{-1}$.
For any test functions $\psi$ (suffice to take $\psi \in L^2$), then consider $\langle u , A^* \psi \rangle$ (as distributional pairing). This makes sense since  $A^*$ is the $L^2$ adjoint of $A$ which is just equal to $A$ since $A: L^2 \rightarrow L^2$ is self adjoint, which means $A^*$ is also one Sobolev degree smoothing($A$ is by \ref{lemma11})) so $A^* \psi  \in H^1$, and we have:

$$\langle u , A^* \psi \rangle = \lim \langle u_{i_k} , A^* \psi \rangle =  \lim \langle Au, \psi \rangle = 0 $$

This shows that $u \in Ker(A)$, but since each $u_{i_k} \perp Ker(A)$ we have $u \perp Ker(A)$ by continuity of inner product, so $u \in Ker(A)$ and $u \perp Ker(A)$ so $u = 0$, but this contradicts with $|u|_{H^{-1}}=1$, so we are done.

This shows that there exists some $C>0$ such that for all $u \in H^{-1}(N)$ with $u \perp Ker(A)$, we have $|u|_{H^{-1}} \leq C|Au|_{L^2}$, let $u_i$ be any sequence such that $Au_i$ converges in $Y$, then consider the $\Tilde{u_i} := u_i - proj_{ker(A)} u_i $, then $A(\Tilde{u_i}) = A(u_i)$, so $Au_i$ being Cauchy implies $\Tilde{u_i}$ is Cauchy, let $u$ be the limit of $u_i$, then $Au = \lim{A u_i}$. so indeed $A$ has closed range in $Y$.

So $A$ has closed range that's dense in $Y$, so (b) follows.

\end{proof}

\begin{proof}[Proof of surjectivity of $A$]
    We are now in a position to prove that $A: L^2(N) \rightarrow H^1(N)$ is in fact surjective. We do so by first proving that $A: L^2 \rightarrow L^2$ is injective; suppose $Af = 0$, then $\langle Af , f \rangle_{L^2} =0$, then by definition of $A$ we have 
    $$Af = \sum_j^n \phi_j I_j^*I_j\phi_j f = 0$$

so

$$0 = \langle \sum_j^n \phi_j I_j^*I_j\phi_j f , f \rangle_{L^2(N)} = \sum_j^n \langle I_j^*I_j\phi_j f , \phi_j f \rangle_{L^2(\Bar{U_j})}$$ $$= \sum_j^n \langle I_j\phi_j f , I_j \phi_j f \rangle_{L^2(\Bar{U_j})} = \sum_j^n |I_j \phi_j f|^2_{L^2(\Bar{U_j})}$$
    
    so each $I_j \phi_j f =0$, by the injectivity of $I_j$ on $L^2$ \cite{IKL2023}, we have that $\phi_j f$ is 0 for all $j$ so $f = 0$.\\

    This shows that $A$ is injective on $L^2$, since $A: L^2(N) \rightarrow L^2(N)$ is self adjoint, $A^*$ is also injective, by \ref{thm12} (b) we have $A: L^2 (N) \rightarrow H^1(N)$ is surjective.
\end{proof}

\begin{proof}[Proof of main theorem 2]:

Let $f \in C^l(M)$ with $1 < m+1 < l-1 < k-7$, $m,l,k \in \mathbb{N}$. Extend $f$ to $C^l(N)$ and still denote it $f$, so that it is in $H^l(N)$, so that in particular $f \in H^1(N)$. By the preceding result there exists $h \in L^2(N)$ such that $Ah = f$, by \ref{thm12}(a) since $f \in H^l(N)$, $h \in H^{l-1}(N)$, by Sobolev embedding $h \in C^m(N)$. 

Define 
\begin{equation}
w_1 := I_1 \phi_1 h = \int_0^{\tau_1(x,v)} \phi_1 (h (\varphi_{1,(x,v)}(t)) dt
\end{equation}

where $\tau_1$ and $\varphi_{(1, \cdot) }$ are the exit time function and geodesic flow with respect to $\Bar{M_1}$. The geodesic flow of a $C^k$ metric has $k-1$ regularity so $\varphi_1 \in C^{k-1}(\partial_+SM_1)$. By an argument identical to that in (Theorem 3.2.6 \cite{PSUbook}) for finite regularity $k$, the odd extension of $\tau_1$, $\Bar{\tau_1} \in C^{k-4}(\partial SM_1)$ , $\Bar{\tau_1}|_{\partial_+ SM_1 } = \tau_1|_{\partial_+SM_1} \in C^{k-4}(\partial_+SM_1)$. This shows that $w_1 \in C^{\min{(m,k-4)}}(\partial_+SM_1)$.

Since $SM$ is away from $\partial_0 SM_1$, we have that $\tau_1|_{SM} \in C^{k-1}(SM)$. Consider $w_1^\# = w_1(\varphi_{(1, \tau_1(x,v)})|_{SM} \in C^{\min{(m,k-4})}(SM)$.
Define $w := w_1^\#|_{\partial_+SM}$, clearly $w^\# = w_1^\#|_{SM}$ since they both agree on $\partial_+SM$ and are constant along geodesics, this shows that $w^\# \in C^{\min{m,k-4}}(SM)$, and so $w \in C_\alpha^m(\partial_+ SM)$.

Now it remains to prove that $I^* w = f$, we have that for all $x \in M$ $$I^*w(x) = \int_{S_xM} w^\#(x,v) dS_x(v) = \int_{S_xM} w_1^\#(x,v) dS_x(v)$$ $$=(I_1^* w_1)(x) = I_1^*I_1 \phi_1 h (x) = Ah(x) = f (x)$$.

\end{proof} \hfill \\

\section{Boundary determination from scattering relation}
\hfill \\

We are now ready to prove theorem 3. We first prove the finite regularity version of (Theorem 5.1.1 of \cite{PSUbook}] and (Theorem 1.6 in \cite{PesUhl2005}).

\subsection{Geometric Preliminaries (cont.)}

\begin{definition}
    Let $\nu$ be the inward pointing normal vector. Define $\partial_{\pm}SM := \{(x,v) \in \partial SM  | \pm \langle v, \nu \rangle \geq 0 \}$, also define the \textit{glancing region} $\partial_0 SM = \partial_+ SM \bigcap \partial_-SM$.
\end{definition}

\begin{remark}
    Note that if we have two metrics $g_1$ and $g_2$ with the same boundary distance function, then by virtue of \ref{thm1} the sets above are all the same.
\end{remark} 

\begin{definition}[Exit time function]
    Let $(M,g)$ be a simple surface. Define $\tau(x,v): SM \rightarrow \mathbb{R}$ the \textit{exit time function}, defined by the length of the (unique) geodesic starting at $x$ in the direction of $v \in S_xM$ and ends at the boundary.

    The non-trapping condition of a simple manifold says precisely that the exit time function is bounded. And the strict convexity condition implies $\tau(x,v) = 0$ for $(x,v) \in \partial_0 SM$.
    
    An argument similar to (lemma 3.2.3 in \cite{PSUbook}) shows that $\tau$ is $C^{k-1}$ away from $\partial_0 SM$.
\end{definition}

\begin{definition}[Scattering relation]
    Define the \textit{odd extension of the exit time function} $\Tilde{\tau}(x,v) = \tau(x,v) - \tau(x,-v)$

    Define the \textit{scattering relation} $\alpha(x,v) : \partial SM \rightarrow \partial M$ to be $$\alpha(x,v) := ( \varphi_{\Tilde{\tau}(x,v)}(x,v) , \varphi'_{\Tilde{\tau}(x,v)}(x,v))$$ where $(\varphi, \varphi')$ is the geodesic flow on $SM$. Clearly $\alpha : \partial_+ SM \rightarrow \partial_- SM$ and vice versa, and $\alpha^2 = id$.

\end{definition}

\begin{definition}
    Let $w \in C(\partial_+SM)$, define $w^\# \in C(SM)$ by $w(\varphi_{\tau{(x,v)}}(x,v))$.

    Also define the odd and even continuation of $w$:

 $$A_{\pm}w(x,v)= 
\begin{cases}    
   w(x,v) & (x,v) \in \partial_+SM \\
   \pm w \circ \alpha (x,v) & (x,v) \in \partial_- SM
\end{cases}$$

Equip $\partial_+ SM$ with the $L^2$ inner product $\int_{\partial_+SM} u v \mu d\Sigma$, with $\mu = \langle \xi, \nu \rangle$ and $d\Sigma = d(\partial M) \wedge  d(S_xM) $ (\cite{PesUhl2005})

Also equip $\partial SM$ with a similar $L^2$ structure with $\int_{\partial_+SM} u v |\mu| d\Sigma$. Then it can be shown (Lemma 9.4.5 \cite{PSUbook}) that $A_{\pm} : L^2_\mu (\partial_+SM) \rightarrow L^2_{\mu}(\partial SM)$ is a bounded operator, and the adjoint $A^*$ is given by $A^*_{\pm} u = ( u \pm u \circ \alpha)|_{\partial_+ SM}$.

\end{definition}

\begin{definition}
    
Define the spaces $$C_\beta^j(\partial_+SM) := \{ w \in C^j(\partial_+SM) : A_+ w \in C^j(\partial SM) \}$$

$$C_\alpha^j(\partial_+SM) := \{ w \in C^j(\partial_+SM) : w^\# \in C^j(SM) \}$$

\end{definition}

\begin{definition}
    We define the \textit{Hilbert transform} $$Hu(x,\xi) = \frac{1}{2\pi} \int_{S_xM}  \frac{1+(\xi,\eta)}{(\xi_\perp , \eta)}, \ \ \ \ \ \xi \in S_xM$$

Also denote the odd and even part of the Hilbert transform $H_+$ and $H_-$ respectively, note that $H_+ u = Hu_+$ and $H_- u = Hu_-$.
\end{definition}

\begin{definition}
    For a $C^k$ metric $g$ We also define the Geodesic vector field $X: C^m(SM) \rightarrow C^{\min{m,k-1}}(SM)$ given by $$Xu(x,\xi) = \frac{d}{dt} (u (\varphi_t(x,v)))|_{t=0}$$

    where $\varphi$ is the geodesic flow.

    Also define $X_\perp: C^m(SM) \rightarrow C^{\min{m,k-1}}(SM)$ given by $$X_\perp u(x,\xi) = \frac{d}{dt} (u (\psi_t(x,v)))|_{t=0}$$
    where $\psi_t (x,v) = (\gamma_{x,v_\perp} (t), W(t))$, where $v_\perp$ is the $90^\circ$ clockwise rotation (This is well defined since our manifold is orientable and 2D), and $W(t)$ is the parallel transport of $v$ along the geodesic $\gamma_{x,v_\perp}$ 
\end{definition}

Finally we define the DN map:

\begin{definition}
    Since we are working with simple surfaces which are diffeomorphic to closed disk $\mathbb{D} \subset \mathbb{R}^2$, we may assume a global coordinate on $M$. Let $0<\lambda<1$ and $f \in C^{2,\lambda}(\partial M)$ and assume the metric $g$ is at least $C^3$, then by theorem 6.14 \cite{gbtrud} there is a unique harmonic $u \in C^{2,\alpha}(M)$ with 
    $$ \Delta u = 0 , \ \ \ \ \ \ u|_{\partial M} =f $$
    
    Define the Dirichlet to Neumann (DN) map $\Lambda: C^{2,\lambda}(\partial M) \rightarrow C^{1,\lambda}(\partial M)$ by $\Lambda f = \partial_{\nu}u$

\end{definition}

\begin{remark}
    Note that by virtue of \ref{thm1}, if two metrics have the same boundary distance function it also implies (after possibly applying a boundary fixing diffeomorphism with one regularity higher than that of the metric) they have the same inward pointing normal vector. 
\end{remark}

\subsection{From surjectivity of $I^*$ to scattering relation}
\hfill \\

Since $A_+w = w^\#|_{\partial_SM}$, it is clear that $C_\alpha^j(\partial_+SM) \subset C_\beta^j(\partial_+SM)$, the theorem below shows a partial converse:

\begin{theorem}\label{fold}
    Let $g \in C^k$, $k>5$,  then $C_\beta^{2j}(\partial_+SM) \subset C_\alpha^{\left \lfloor{\frac{\min{j,k-5}}{2}} \right \rfloor}(\partial_+SM)$
\end{theorem}

We need a couple of technical lemmas for the proof of theorem 6.

\begin{lemma}[lemma 3.2.9 in \cite{PSUbook}]\label{lemma329}
    Let $g$ be a $C^k$ metric on a compact smooth manifold with boundary $M$, let $(x_0, v_0) \in \partial_0 SM$, and let $\partial M$ be strictly convex near $x_0$. Assume that $M$ is embedded in a compact manifold $N$ without boundary. Then, near $(x_0, v_0)$ in $SM$, one has 
\begin{equation}
    \tau(x,v) = Q(\sqrt{a(x,v)}, x,v), 
    -\tau(x,-v) = Q(-\sqrt{a(x,v)}, x,v)
\end{equation}

Where $Q$ is $C^{k-5}$ near $(0,x_0,v_0) \in \mathbb{R} \times SN$ and $a$ is $C^{k-2}$ near $(x_0,v_0) \in SN$.

\end{lemma}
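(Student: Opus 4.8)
The plan is to reduce the statement to the local study of a single scalar function along the geodesic flow and to read off the square-root structure from a nondegenerate quadratic degeneracy forced by strict convexity. Fix a smooth boundary defining function $\rho$ for $\partial M$ on the ambient closed manifold $N$, normalized so that $\rho>0$ in $\iM$, $\rho=0$ on $\partial M$, and $d\rho\neq 0$ near $\partial M$; such a $\rho$ may be taken $C^\infty$ since only the metric has finite regularity. Set $\beta(t,x,v):=\rho(\gamma_{x,v}(t))$. Since the geodesic vector field on $SN$ has coefficients built from the Christoffel symbols, it is of class $C^{k-1}$, so its flow $\varphi_t(x,v)$ — and hence $\gamma_{x,v}(t)$ and $\dot\gamma_{x,v}(t)$ — is $C^{k-1}$ jointly in $(t,x,v)$; consequently $\beta$ is $C^{k-1}$. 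The advantage of working with $\beta$ is that its low-order $t$-derivatives can be computed through the geodesic equation: $\partial_t\beta = d\rho(\dot\gamma)$ and $\partial_t^2\beta = (\nabla^2\rho)(\dot\gamma,\dot\gamma)$, where $\nabla^2\rho$ is the $g$-covariant Hessian, itself of class $C^{k-1}$. At the glancing point $(x_0,v_0)$, taken with tangency at $t=0$, one has $\beta=\partial_t\beta=0$, and strict convexity of $\partial M$ gives $\partial_t^2\beta(0,x_0,v_0)=2c_0>0$; thus $t\mapsto\beta(t,x,v)$ has a nondegenerate minimum at the glancing parameter.

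First I would locate this minimum for nearby $(x,v)$. Since $\partial_t\beta$ is $C^{k-1}$ and $\partial_t^2\beta\neq 0$ near the glancing point, the implicit function theorem produces a function $t_*(x,v)$ solving $\partial_t\beta(t_*,x,v)=0$. A Hadamard/Taylor-with-remainder factorization then yields $\beta(t,x,v)-\beta(t_*,x,v)=(t-t_*)^2\,\psi(t,x,v)$ with $\psi(t_*,x_0,v_0)=c_0>0$ and $\psi$ inheriting the regularity of $\partial_t^2\beta$. Define $a(x,v):=-\beta(t_*(x,v),x,v)/\psi(t_*(x,v),x,v)$. Because $\beta(t_*,\cdot\,)$ is the minimal value of $\beta$ along the geodesic, $a\geq 0$, with $a=0$ exactly on $\partial_0 SM$; this is the quantity that becomes the radicand. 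Propagating the regularity through the implicit-function step and the compositions evaluating $\beta$ and $\psi$ at $t_*$ gives $a\in C^{k-2}$ near $(x_0,v_0)$.

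Next I would extract the two roots. Writing $\sigma(t,x,v):=(t-t_*(x,v))\sqrt{\psi(t,x,v)}$, one has $\partial_t\sigma|_{t=t_*}=\sqrt{\psi(t_*,\cdot\,)}>0$, so $\sigma$ is a legitimate change of the $t$-variable near the glancing parameter, and the equation $\beta=0$ becomes $\sigma^2=a$, that is $\sigma=\pm\sqrt a$. Inverting $\sigma$ in its first slot by the implicit function theorem yields $t=Q(\sigma,x,v)$, and the two solutions $\sigma=\pm\sqrt a$ are precisely the forward and backward intersections of the geodesic with $\partial M$ near the tangency. After checking — by shrinking the neighborhood so that the geodesic is $C^1$-close to the tangent one and meets $\partial M$ only near $t=0$ — that the forward root is the exit time and the backward root is $-\tau(x,-v)$, we obtain $\tau(x,v)=Q(\sqrt{a(x,v)},x,v)$ and $-\tau(x,-v)=Q(-\sqrt{a(x,v)},x,v)$. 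Carrying the successive losses through the implicit-function inversions and the square-root normal form gives $Q\in C^{k-5}$ near $(0,x_0,v_0)$.

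The main obstacle is exactly the regularity bookkeeping at finite differentiability: unlike the smooth case of \cite{PSUbook}, every implicit-function inversion, every composition with $t_*$, and above all the square-root normal form $\beta=\sigma^2\cdot(\text{unit})$ cost derivatives, and one must verify that each step retains enough regularity for the next to be applied (in particular that $\psi$ and $a$ stay $C^1$ so the final inversion is legitimate, and that the geodesic flow is differentiable to the orders in which we differentiate $\beta$ in the parameters $(x,v)$). Keeping these losses honest, so that they accumulate to precisely $C^{k-2}$ for $a$ and $C^{k-5}$ for $Q$, is the delicate point; the geometric input (nondegeneracy from strict convexity) and the algebraic structure of the two roots are the same as in the smooth setting.
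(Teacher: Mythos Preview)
Your proposal is correct and follows exactly the route the paper has in mind: the paper's own proof is a single line stating that the result follows from a regularity-counting pass through the proof of Lemma~3.2.9 in \cite{PSUbook}, and what you have written is precisely that pass spelled out (boundary defining function, nondegenerate second-order tangency from strict convexity, implicit function for the critical time, Hadamard factorization, and inversion of the normal-form variable). Your identification of the regularity bookkeeping as the only delicate point is on target and is the entire content of the paper's argument.
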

\begin{proof}
    The lemma follows from a simple regularity counting argument in the proof of the smooth metric setting.
\end{proof}

\begin{lemma}[Whitney(\cite{whitney}]\label{lemmawhit}
    Suppose $f \in C^{2k}(\mathbb{R})$ and $f(t) = f(-t)$ for all $t \in \mathbb{R}$, then there exists $h \in C^k$ with $f(t) = h(t^2)$ for all $t \in \mathbb{R}$.
\end{lemma}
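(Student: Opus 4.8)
The plan is to exhibit the function $h$ explicitly and to concentrate the entire difficulty at the single point $s=0$. For $s\ge 0$ I would set $h(s):=f(\sqrt{s})$. On the open half line $(0,\infty)$ the square root is smooth, so $h$ inherits the $C^{2k}$ regularity of $f$ there by the chain rule, and nothing needs to be checked away from the origin. The identity $f(t)=h(t^2)$ is then immediate: for $t\ge 0$ one has $h(t^2)=f(\sqrt{t^2})=f(t)$, and for $t<0$ one has $h(t^2)=f(|t|)=f(-t)=f(t)$ by evenness. Thus the whole content of the lemma is the assertion that $h$ extends to a $C^k$ function up to $s=0$, and the mechanism that makes this possible is that evenness forces every odd-order derivative of $f$ to vanish at the origin, which is exactly what cancels the $1/\sqrt{s}$ singularities produced by differentiating $f(\sqrt{s})$ in $s$.

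To make this precise I would prove by induction on $j\in\{0,\dots,k\}$ the formula $h^{(j)}(s)=2^{-j}F_j(\sqrt{s})$ on $(0,\infty)$, where $F_0:=f$ and each $F_j$ is an \emph{even} function of class $C^{2(k-j)}$. The inductive step rests on a Hadamard-type division lemma: if $F$ is even and $C^{m}$ with $m\ge 2$, then $F'$ is odd with $F'(0)=0$, so writing $F'(\tau)=\tau\int_0^1 F''(\sigma\tau)\,\mathrm{d}\sigma$ produces an even function $G(\tau):=\int_0^1 F''(\sigma\tau)\,\mathrm{d}\sigma$ of class $C^{m-2}$ with $F'(\tau)=\tau G(\tau)$. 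Applying this with $F=F_j$, setting $F_{j+1}:=G$, and differentiating the inductive formula gives $h^{(j+1)}(s)=2^{-j}F_j'(\sqrt{s})\cdot\tfrac12 s^{-1/2}=2^{-(j+1)}F_{j+1}(\sqrt{s})$, where the dangerous factor $s^{-1/2}$ is absorbed exactly by the vanishing of $F_j'$ at $0$. The regularity bookkeeping is the crux: each step costs two derivatives (one to pass to $F_j'$, one more to divide through the integral form, which needs $F_j''$), so $F_j\in C^{2(k-j)}$, and the construction runs precisely $k$ times before the budget is exhausted, leaving $F_k\in C^0$. In particular each $h^{(j)}(s)=2^{-j}F_j(\sqrt{s})$ is continuous on $(0,\infty)$ and admits the finite limit $2^{-j}F_j(0)$ as $s\to 0^+$.

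Finally I would upgrade ``the derivatives exist and converge on $(0,\infty)$'' to ``$h$ is $C^k$ up to the boundary'' by an elementary mean value argument: if a continuous $g$ on $[0,\infty)$ is differentiable on $(0,\infty)$ and $g'(s)\to\ell$ as $s\to0^+$, then $g$ is right-differentiable at $0$ with $g'(0)=\ell$ and $g'$ is continuous there, since $\bigl(g(s)-g(0)\bigr)/s=g'(\xi_s)$ for some $\xi_s\in(0,s)$ by the mean value theorem and $\xi_s\to 0^+$. Applying this successively to $h,h',\dots,h^{(k-1)}$, each already continuous on $[0,\infty)$ by the previous paragraph, yields $h\in C^k([0,\infty))$; if a function on all of $\mathbb{R}$ is wanted one extends $h$ arbitrarily to $(-\infty,0)$ by any $C^k$ extension, which is harmless since only $s\ge 0$ enters the identity $f(t)=h(t^2)$. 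I expect the main obstacle to be the regularity bookkeeping in the induction---verifying that the Hadamard division is legitimate at each stage (it requires $F_j\in C^2$, hence $j\le k-1$) and that evenness is preserved so that the $1/\sqrt{s}$ factors keep cancelling---rather than any single hard estimate; the analysis away from the origin and the boundary mean value step are routine.
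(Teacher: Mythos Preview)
Your argument is correct and follows the same path as the paper: set $h(s)=f(\sqrt{s})$, show it is $C^k$ on $[0,\infty)$, then extend to the left. The paper's proof is essentially two citations---it quotes Whitney's original result for the regularity of $f(\sqrt{\,\cdot\,})$ and Borel's lemma for the extension to $(-\infty,0)$---whereas you supply a self-contained proof of the Whitney step via iterated Hadamard division (which is in fact close to how Whitney argued) and invoke a generic half-line $C^k$ extension instead of Borel. The content is the same; your version is simply more explicit, and the regularity bookkeeping (losing two derivatives per division, so the induction runs exactly $k$ times) is handled correctly.
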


\begin{proof}
    This follows from Whitney's proof in \cite{whitney} that if $f \in C^{2k}$ and even then $f(\sqrt{x}) \in C^k$, and the fact that for every sequence $r_i$, there exists a smooth function whose $ith$ derivative at $0$ is $r_i$ (Exercise 8C.2 \cite{folland}.

\end{proof}

\begin{proof}[Proof of theorem \ref{fold}]
    This will be a modified version of the proof in the smooth setting for Theorem 5.1.1 in \cite{PSUbook}. We embed $(M,g)$ isometrically into a closed manifold $(N,g)$ with the same dimension with metric of the same regularity. Let $A_+ w \in C^j(\partial_+ SM)$, extend $A_+w$ to some $W \in C^j$. Consider $F(t,x,v) = \frac{1}{2}W(\varphi_t(x,v))$, then $$w^\#(x,v) = \frac{1}{2}[W(\varphi_{\tau(x,v)}(x,v) + W(\varphi_{-\tau(x,-v)}(x,v)$$  $$ = F(\tau(x,v),x,v) + F(-\tau(x,v),x,v)$$. A similar proof for that of (Lemma 3.2.3 \cite{PSUbook}) show that $\tau$ is $C^{k-2}$ (for $k>2$) away from the glancing region $\partial_0 SM$, so the regularity of $w^\#$ is determined by that near the glancing region.

    Fix some $(x_0, v_0) \in \partial_0 SM$, by \ref{lemma329} above, for $(x,v)$ near $(x_0,v_0)$ we can write $w^\# (x,v) = F(Q(\sqrt{a(x,v),x,v))} + F(Q(-\sqrt{a(x,v)},x,v)$ with $Q$ being $C^{k-5}$ near $(0,x_0,v_0) \in \mathbb{R} \times SN$ and $a$ is $C^{k-2}$ near $(x_0,v_0) \in SN$.
    
    Set $G := F(Q(r,x,v),x,v)$ so we have that near $(x_0,v_0)$ we have $w^\# (x,v) = G(\sqrt{a(x,v)},x,v) + G(-\sqrt{a(x,v)},x,v)$. Clearly $G(r,x,v) + G(-r,x,v)$ is $C^{\min{k-5,m}}(\mathbb{R})(\mathbb{R} \times SN)$ near $(0,x_0,v_0)$ and even in $r$, so we may apply \ref{lemmawhit} above to obtain $H \in C^{\left \lfloor{\frac{\min{j,k-5}}{2}} \right \rfloor}(\mathbb{R} \times SN)$ near $(0,x,v,)$ such that $G(r,x,v) + G(-r,x,v) = H(r^2,x,v)$, which implies near $(x_0,v_0)$ we have $$w^\#(x,v)= G(\sqrt{a(x,v)},x,v) + G(-\sqrt{a(x,v)},x,v) = H(a(x,v),x,v)$$

    which shows that $w^\#$ is $C^{\left \lfloor{\frac{\min{j,k-5}}{2}} \right \rfloor}(SM)$ near $(x_0,v_0)$ in $SM$. The regularity away from the glancing region is $k-1$, so $w^\# \in C^{\left \lfloor{\frac{\min{j,k-5}}{2}} \right \rfloor}(SM)$.

\end{proof}

We now state one final technical theorem we need to prove theorem 3. Following the set up for (Theorem 1.6 \cite{PesUhl2005}).
Let $w \in C_\alpha^{(2,\lambda)}(\partial_+SM)$, if we assume $g$ is a simple $C^3$ metric the argument in Pestov Uhlmann remains valid to show that for $f \in C^{(2,\lambda)}(M)$

\begin{equation}\label{pu1.2}
    I X f = -A_-^* f^0
\end{equation}
where $f^0 = f|_{\partial M}$, from an application of the Hilbert transform (Theorem 1.5 in \cite{PesUhl2005}) we also have:

\begin{equation}\label{pu1.5}
2\pi A^*_{-} H_+ A_+ w = I X_\perp I^*w
\end{equation}

If $h = I^*w \in C^{(2,\lambda)}(M)$, and $h_*$ its harmonic conjugate, then $IX_\perp h = IX h_*$, so \ref{pu1.2} and \ref{pu1.5} together gives

\begin{equation}\label{pu1.6}
2\pi A^*_{-} H_+ A_+ w = -A^*_- h_*^0
\end{equation}

We now prove a converse of this result, following Theorem 1.6 in \cite{PesUhl2005}.

\begin{theorem}\label{thm1.6}
    Suppose $w \in C_\alpha^{(2,\lambda)}(\partial_+ SM)$.  $h_* \in C^{(2,\lambda)}(M)$ the harmonic continuation of $h_*^0 \in C^{(2,\lambda)}(\partial M)$. Then $h := I^* w$ and $h_*$ are harmonic conjugates if and only if \ref{pu1.6} holds.
\end{theorem}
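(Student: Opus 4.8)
The plan is to obtain the forward implication directly by combining the identities \ref{pu1.2} and \ref{pu1.5}, and to obtain the converse by running that chain backwards and then feeding the resulting equation into the injectivity of the X-ray transform on one-forms. As a preliminary I would record that $h := I^{*}w$ actually lies in $C^{2,\lambda}(M)$: since $w \in C_{\alpha}^{2,\lambda}(\partial_{+}SM)$ we have $w^{\#}\in C^{2,\lambda}(SM)$, and $I^{*}w(x)=\int_{S_{x}M}w^{\#}(x,v)\,dS_{x}(v)$ is a fiber integral of a $C^{2,\lambda}$ function over the sphere bundle of the (at least) $C^{3}$ metric, hence $C^{2,\lambda}$ in $x$. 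This makes \ref{pu1.2} applicable with $f=h_{*}$ and \ref{pu1.5} applicable with $I^{*}w=h$.

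For the forward direction, suppose $h$ and $h_{*}$ are harmonic conjugates, so $\star\,dh=dh_{*}$ in the orientation fixed by the definition of $X_{\perp}$; since $X_{\perp}h$ and $Xh_{*}$ are the fiberwise restrictions of $\star\,dh$ and $dh_{*}$ respectively, this is the pointwise identity $X_{\perp}h=Xh_{*}$ on $SM$. Applying $I$, and using \ref{pu1.5} on the left and \ref{pu1.2} with $f=h_{*}$ (so $f^{0}=h_{*}^{0}$) on the right,
\begin{equation*}
2\pi A_{-}^{*}H_{+}A_{+}w \;=\; IX_{\perp}h \;=\; IXh_{*} \;=\; -A_{-}^{*}h_{*}^{0},
\end{equation*}
which is \ref{pu1.6}.

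For the converse, assume \ref{pu1.6}. Reading the display in reverse, \ref{pu1.5} identifies its left side with $IX_{\perp}h$ and \ref{pu1.2} (with $f=h_{*}$) identifies its right side with $IXh_{*}$, so \ref{pu1.6} is precisely $I(X_{\perp}h-Xh_{*})=0$. The function $X_{\perp}h-Xh_{*}$ on $SM$ is the fiberwise restriction of the one-form $\beta:=\star\,dh-dh_{*}$, which is of class $C^{1,\lambda}$ --- in particular Lipschitz --- because $h,h_{*}\in C^{2,\lambda}(M)$. Since the simple $C^{3}$ surface $(M,g)$ is in particular simple of class $C^{1,1}$, the injectivity of the geodesic X-ray transform on Lipschitz one-forms up to potentials from \cite{IlmKyk2023} applies and yields $\beta=dp$ for some $p$ with $p|_{\partial M}=0$; then $p\in C^{2,\lambda}(M)$ since $dp=\beta\in C^{1,\lambda}$. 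To finish I would upgrade $\beta=dp$ to $\beta=0$: applying the codifferential $\delta_{g}$ to $\star\,dh-dh_{*}=dp$, using that $\delta_{g}(\star\,dh)=0$ (on a surface, $\star$ of the closed form $dh$ is co-closed) and $\delta_{g}\,dh_{*}=0$ (as $h_{*}$ is harmonic), gives $\delta_{g}\,dp=0$, so $p$ is $g$-harmonic with vanishing boundary value, whence $p\equiv 0$ by uniqueness for the Dirichlet problem for $\Delta_{g}$ on $M$. Therefore $\star\,dh=dh_{*}$, i.e. $h$ and $h_{*}$ are harmonic conjugates (and $h$ is then automatically harmonic).

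The algebra here is exactly that of (Theorem~1.6, \cite{PesUhl2005}); the part requiring genuine care, and where I expect the real work, is verifying the low-regularity inputs: that $I^{*}w\in C^{2,\lambda}(M)$ so the Pestov--Uhlmann identities can be invoked, that $\beta=\star\,dh-dh_{*}$ carries exactly the Lipschitz regularity demanded by the one-form injectivity theorem of \cite{IlmKyk2023}, and that the hypotheses of that theorem (simple $C^{1,1}$, orientability, boundary strict convexity) are genuinely in force --- all of which follow from $(M,g)$ being simple of class $C^{3}$. The remaining ingredients, the two-dimensional Hodge identities and uniqueness for the Dirichlet problem, are classical and insensitive to the drop in regularity.
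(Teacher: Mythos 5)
Your proposal is correct and follows essentially the same route as the paper: both directions reduce \ref{pu1.6}, via \ref{pu1.2} and \ref{pu1.5}, to the vanishing of the X-ray transform of the one-form $\star\, dh - dh_*$ and then invoke the injectivity of $I$ on Lipschitz one-forms for $C^{1,1}$ simple surfaces from \cite{IlmKyk2023}. The only cosmetic difference is at the end of the converse: the paper feeds an arbitrary $C^{2,\lambda}$ extension $q$ of $h_*^0$ into \ref{pu1.2} and defines the harmonic conjugate directly as $q-p$, whereas you plug in the harmonic extension $h_*$ itself and then kill the potential $p$ by uniqueness for the Dirichlet problem --- the two variants are interchangeable.
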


\begin{proof}
    By \ref{pu1.2} and \ref{pu1.5}, \ref{pu1.6} above is equivalent to $IX_\perp h = IXq$ where $q$ is any $C^{(2,\lambda)}$ continuation of $h_*^0$. So $I(\nabla q + \nabla_\perp h) = 0$, since $g$ is $C^3$ simple, it is in particular $C^{1,1}$ simple (\cite{IlmKyk2023}), so by the injectivity of lipscthiz 1 form  for $C^{1,1}$ simple manifolds, the vector field $\nabla q + \nabla_\perp h = \nabla p$ for $p \in C^{1,1}(M)$ and $p|_{\partial M}= 0$.
    (Note : The injectivity of Lipschitz 1-form with arbitrary boundary conditions on $C^{1,1}$ simple manifolds follows from theorem 1 (b) in \cite{IlmKyk2021} and lemma 2 in \cite{IlmKyk2023}, see proof of theorem 1 in \cite{IlmKyk2023}). 

    Since $q$ and $h$ are $C^{2,\lambda)}$, their gradients are $C^{1,\lambda}$, which implies $p \in C^{(2,\lambda)}(M)$ as well. Now consider the function $h_* := q-p$, then $h_*$ is in $C^{2,\lambda)}$ and is the harmonic continuation of $h_*^0$ since $h_*^0|_{\partial M} = q|_{\partial M} = h_*^0$, and $h$ and $h_*$ are harmonic conjugates by constructions. 
\end{proof}

We are now ready to prove theorem 3.
 
\begin{proof}[Proof of theorem 3]
\hfill \\

    Let $g_1$ and $g_2$ be two $C^{16}$ simple metrics on a compact two dimensional manifold with boundary $M$, so that $d_{g_1} = d_{g_2}$. By theorem 1 there exists a boundary fixing gauge $\Phi: M \rightarrow M$ such that $g_1|_{\partial M} = \Phi ^* g_2 |_{\partial M}$. Since DN map is invariant in two-dimensional under such a gauge (See the beginning of 11.6 in \cite{PSUbook}), we will denote $\Phi^*g_2$ simply by $g_2$ from here on (Since proving DN maps for $g_1$ and $\Phi^*g_2$ are equal implies equality for DN map of $g_2$). From here on out we will use subscript $1,2$ to denote all geometric objects and operators that depend on the metrics.
    
    Suppose $l=9$, $m=6$ so that $k-7= 9 > l-1 =8 > +1 = 7$. Given $h_*^0 \in C^{10}(\partial M)$, let $h_{*,1} \in C^{10}(M)$ its harmonic continuation with respect to $g_1$ and $h_1 \in C^{10}(M)$ its harmonic conjugate. By theorem 2 \ref{thm2} we can find $w \in C_\alpha^6(\partial_+SM))$ such that $I_1^*w= h_1$. By the analysis above we have that \ref{pu1.6} holds for $g_1$.

    Note that $A,A^*_{-},A_+$ are all determined by the scattering relation, so by assumption they are the same for both metrics. $H_+$ applied to the function $ (A_+w) \in C(\partial SM)$ is an integral over $S_xM$ which is the same for both metrics since $g_1|_{\partial M} =g_2|_{\partial M}$ by \ref{thm1}.

So \ref{pu1.6} holds for $g_2$ as well.

Clearly $w \in C_\alpha^6(\partial_+SM) \subset  C_\beta^6(\partial_+SM)$ for $g_1$, but $C_\beta^6(\partial_+SM)$ is the same for both metrics since it is determined by the regularity of $A_+ w$ on $\partial SM$, so we have that $w \in C_\beta^6(\partial_+SM)$ for $g_2$ as well, now apply \ref{fold} and conclude that for $g_2$ we have:

$$w \in C_\beta^6(\partial_+SM) \subset C_\alpha^3(\partial_+SM)$$

Since \ref{pu1.6} holds, we can apply theorem \ref{thm1.6} to conclude that the function $I_2 ^* w \in C^3(M)$, and any $C^3$ harmonic continuation $h_{*,2}$ with respect to $g_2$ of $h_*^0$ are $C^{(2,\lambda)}$ harmonic conjugates.

We know that $h_i^0 := h_i|_{\partial M}$. Now the $g_1$ DN map applied to $h_*^0$ is $$\Lambda_1 h_*^0= \langle \nu, \nabla h_{*,1}|_{\partial M} \rangle = \langle \nu_\perp , \nabla h_{1}^0|_{\partial M} \rangle = \partial_{\nu_\perp}h_1^0$$.

But note that $h_1^0 = h_2^0$ since $I_i^*w |_{\partial M} = \int_{S_xM} A_+ w (x,\xi) dS_xM$ which is the same for both metrics, this shows that as $C^{1,\lambda}(\partial M)$ functions, $\partial_\nu h_{1,*} = \partial_\nu h_{2,*}$ are equal, hence equal as functions of their maximum regularity. This concludes theorem 3.

\end{proof}

\begin{remark}
    We conclude this paper by noting that the only step left for proving boundary rigidity is the Calder\'{o}n problem for $C^{17}$ metrics in two dimension. The Calder\'{o}n problem in 2D was resolved by Lassas and Uhlmann \cite{LaU} in 2D and for real analytic metrics in higher dimension, this result was later generalized to complete manifolds in \cite{LTU}. One promising approach for proving boundary rigidity in the 2D case, is to note that in \cite{LaU} they make use of a result in \cite{LeU} to show that the DN map determines the metric at the boundary in the tangential direction- which is not needed if we are only interested in boundary rigidity by \ref{thm1}. Another possible approach can be used to prove the full Calder\'{o}n problem in 2D for metrics with regularity as low as $C^{1,\alpha}$, is to modify the later proof by Belishev \cite{Belv}, which characterized the complex structure on a manifold by the algebra of holomorphic functions, which in turns determines the conformal classes of metric on $M$. Both proofs can be adapted to the $C^{1,\alpha}$ case since they both rely on the existence of isothermal coordinates \cite{chern} and the analysis of the induced complex structures. It is currently in work at the time of writing of this paper.
\end{remark}

\bibliographystyle{plain}
\bibliography{references}

\begin{thebibliography}{10}

\bibitem{anderson}
Lars Andersson and Piotr~T. Chru\'{s}ciel.
\newblock Solutions of the constraint equations in general relativity satisfying "hyperboloidal boundary conditions".
\newblock {\em Polska Akademia Nauk}, 1996.

\bibitem{Belv}
M.I. Belishev.
\newblock The calder\'{o}n problem for two-dimensional manifolds with the bc-methods.
\newblock {\em SIAM J. Math. Anal.}, 35(1):172--182, 2003.

\bibitem{folland}
Gerald~B. Folland.
\newblock {\em Introduction to Partial Differential Equations. Second edition}.
\newblock Princeton University Press, Princeton, N.J., 1995.

\bibitem{gbtrud}
David Gilbarg and Neil~S. Trudinger.
\newblock {\em Elliptic Partial Differential Equations of Second Order, Second edition}.
\newblock Springer Berlin, Heidelberg, 2001.

\bibitem{IlmKyk2021}
Joonas Ilmavirta and Antti Kykk\"anen.
\newblock {Pestov identities and X-ray tomography on manifolds of low regularity}.
\newblock {\em Inverse Problems and Imaging}, December 2021.
\newblock To appear.

\bibitem{IlmKyk2023}
Joonas Ilmavirta and Antti Kykk\"anen.
\newblock {Tensor tomography on negatively curved manifolds of low regularity}.
\newblock March 2023.

\bibitem{IKL2023}
Joonas Ilmavirta, Antti Kykk\"anen, and Kelvin Lam.
\newblock {Microlocal analysis of the x-ray transform in non-smooth geometry}.
\newblock https://arxiv.org/abs/2309.12702 Sep 2023.

\bibitem{LTU}
Matti Lassas, Michael Taylor, and Gunther Uhlmann.
\newblock The dirichlet-to-neumann map of complete manifolds with boundary.
\newblock {\em Comm. Anal. Geom.}, 11(2):207--221, 2003.

\bibitem{LaU}
Matti Lassas and Gunther Uhlmann.
\newblock On determining a riemannian manifold from the dirichlet-to-neumann map.
\newblock {\em Ann. Sci. Ecole Norm. Sup.. Series 4}, 34(5):771--787, 2001.

\bibitem{irm}
John~M. Lee.
\newblock {\em Introduction to Riemannian Manifolds}.
\newblock Graduate Texts in Mathematics. Springer Cham, 2018.

\bibitem{LeU}
John~M. Lee and Gunther Uhlmann.
\newblock Determining real-analytics anisotropic conductivites by boundary measurements.
\newblock {\em Comm. Pure Appl. Math.}, 42(8):1097--1112, 1989.

\bibitem{marschall1996}
J.~Marschall.
\newblock Nonregular pseudo-differential operators.
\newblock {\em Z. Anal. Anwendungen}, 15(1):109--148, 1996.

\bibitem{Minguzzi}
E.~Minguzzi.
\newblock Convex neighborhoods for lipschitz connections and sprays.
\newblock {\em Springer Vienna}, 177(1):569--625, 2015.

\bibitem{PSUbook}
Gabriel~P. Paternain, Mikko Salo, and Gunther Uhlmann.
\newblock {\em Geometric inverse problems---with emphasis on two dimensions}, volume 204 of {\em Cambridge Studies in Advanced Mathematics}.
\newblock Cambridge University Press, Cambridge, 2023.
\newblock With a foreword by Andr\'{a}s Vasy.

\bibitem{PesUhl2005}
Leonid Pestov and Gunther Uhlmann.
\newblock Two dimensional compact simple {R}iemannian manifolds are boundary distance rigid.
\newblock {\em Ann. of Math. (2)}, 161(2):1093--1110, 2005.

\bibitem{chern}
Chern Shiing-Shen.
\newblock An elementary proof of the existence of isothermal parameters on a surface.
\newblock {\em Proc. Amer. Math. Soc.}, 6(5):771--782, Oct 1955.

\bibitem{whitney}
Hassler Whitney.
\newblock Differentiable even functions.
\newblock {\em Duke Math. J.}, 10(1):159--160, March 1943.

\end{thebibliography}

\end{document}